\begin{document}

\title{Existence of an endogenously complete equilibrium driven by a
  diffusion}

\author{Dmitry Kramkov\thanks{This research was supported in part by
    the Carnegie Mellon-Portugal Program and by the Oxford-Man
    Institute for Quantitative Finance at the University of Oxford.}\\
  Carnegie Mellon University and
  University of Oxford,\\
  Department of Mathematical Sciences,\\ 5000 Forbes Avenue,
  Pittsburgh, PA, 15213-3890, USA}

\date{\today}

\maketitle
\begin{abstract}
  The existence of complete Radner equilibria is established in an
  economy whose parameters are driven by a diffusion process. Our
  results complement those in the literature. In particular, we work
  under essentially minimal regularity conditions and treat the
  time-inhomogeneous case.
\end{abstract}

\begin{description}
\item[MSC:] 91B50, 91B51, 60G44, 26E05. 
\item[Keywords:] real-analytic functions, dynamic equilibrium,
  endogenous completeness, martingale representation.
\end{description}

\section{Introduction}
\label{sec:introduction}

The three basic topics in asset pricing theory are arbitrage,
single-agent optimality, and equilibrium; see, e.g., \citet{Duff:01},
\citet{DanJeanb:03}, and \citet{KaratShr:98}. While in the first two
cases the general mathematical theory is essentially complete; see,
e.g., \citet{DelbSch:94} (arbitrage) and \citet{KramSch:03} and
\citet{Most:14} (optimal investment), the situation with equilibrium
is more involved.

In the continuous time setting, when stocks are defined by their
dividends, the first sufficient conditions for the existence of
dynamic Radner equilibria have been obtained only recently in
\citet{AnderRaim:08}.  A key idea is to look for a \emph{complete}
Radner equilibrium, which is then necessarily constructed in two
steps.  First, one obtains a static Arrow-Debreu equilibrium; this is
standard and requires very few assumptions; see, e.g., \citet{Dana:93}
and \citet{Kram:13}. Then one proves the \emph{endogenous
  completeness} property, that is, shows that the stock prices
generated from the dividends by this Arrow-Debreu equilibrium define a
complete financial market.

The novelty and difficulty are in the second step, which, from a pure
mathematical point of view, can be stated as the following
\emph{backward martingale representation problem};
see~\citet{KramPred:14}. There are random variables $\zeta>0$ and
$\psi = (\psi^j)$, which are explicitly constructed in terms of
model's primitives: dividends, incomes, and utilities. A probability
measure $\mathbb{Q}$ and a $\mathbb{Q}$-martingale $S = (S^j)$ are
defined by
\begin{displaymath}
  \frac{d\mathbb{Q}}{d\mathbb{P}} = \const \zeta \quad \text{and}
  \quad 
  S^j_t = \mathbb{E}_{\mathbb{Q}}[\psi^j|\mathcal{F}_t].
\end{displaymath}
The problem is to check whether every $\mathbb{Q}$-martingale can be
written as a stochastic integral with respect to $S$. In a diffusion
framework, this reduces to the verification of the full rank property
of the Jacobian matrix for the solution of a system of uncoupled
linear parabolic PDEs; see \citet{Takac:12} and \citet{KannRaim-13}.

The results of \citet{AnderRaim:08} have been generalized in
\citet{HugMalTrub:12} and~\citet{RiedHerz:13}. We build on these
works.  Similar to these papers we assume that the parameters of the
economy have a Markov-type dependence on a diffusion process, that the
Jacobian matrix of terminal dividends is non-degenerated, and that the
dependence on time is analytic.

Our contribution is two-fold. First, we work under essentially minimal
regularity conditions. For instance, with respect to the state
variable, the volatility matrix of underlying diffusion is only
uniformly continuous, while the drift vector is just measurable.
Second, we rigorously treat the case of time-dependence in dividends,
income streams, and utilities. For instance, we allow the utility
functions for intermediate consumption to be of the form: $e^{-\nu^m
  t} u_m(c)$ with \emph{different} impatience coefficients $\nu^m$. A
detailed comparison between our assumptions and those in the
literature is given in Section~\ref{sec:model-primitives}.

\subsection*{Basic concepts and notations}
\label{sec:notations}

If $x$ and $y$ are vectors in $\mathbb{R}^n$, then $\ip{x}{y}$ denotes
the scalar product and $\abs{x} \set \sqrt{\ip{x}{x}}$. If $a\in
\mathbb{R}^{m\times n}$ is a matrix with $m$ rows and $n$ columns,
then $ax$ denotes its product on the (column-)vector $x$, $a^*$ stands
for the transpose, and $\abs{a} \set \sqrt{\trace(aa^*)}$.

Let $\mathbf{X}$ be a Banach space and $D$ be a set in a Euclidean
space $\mathbb{R}^d$ contained in the closure of its interior. A map
$\map{f}{D}{\mathbf{X}}$ is \emph{analytic} if for every $x\in D$
there exist a number $\epsilon(x)>0$ and elements $(A_\alpha(x))$ in
$\mathbf{X}$ such that
\begin{displaymath}
  f(y) = \sum_{\alpha} A_\alpha(x) (y-x)^\alpha, \; y\in D,
  \abs{y-x}<\epsilon(x). 
\end{displaymath}
Here the series converges in the norm $\norm{\cdot}_{\mathbf{X}}$ of
$\mathbf{X}$, the summation is taken with respect to multi-indices
$\alpha = (\alpha_1, \dots,\alpha_d)$ of non-negative integers, and,
for $x = (x_1,\dots,x_d)$, $x^\alpha \set \prod_{i=1}^d
x_i^{\alpha_i}$.  A map $\map{f}{D}{\mathbf{X}}$ is \emph{H\"older
  continuous} if there is $0<\delta<1$ such that
\begin{displaymath}
  \sup_{x,y\in D, \; x\not=y}
  \frac{\norm{f(x)-f(y)}_X}{\abs{x-y}^\delta} < \infty.
\end{displaymath}
Of course, a map $\map{f}{D}{\mathbf{X}}$ is \emph{bounded} if $
\sup_{x\in D} \norm{f(x)}_X< \infty$.

In this paper, $\mathbf{X}$ is one of the following spaces of
functions defined on a set $E\subset \mathbb{R}^d$, which is a
$F_\sigma$-set, that is, a countable union of closed sets:
\begin{description}
\item[$\mathbf{L}_\infty=\mathbf{L}_\infty(E) =
  \mathbf{L}_\infty(E,dx)$\rm{:}] the Lebesgue space of bounded
  real-valued functions $f$ on $E$ with the norm
  $\norm{f}_{\mathbf{L}_\infty} \set \esssup_{x\in E} \abs{f(x)}$.
\item[$\mathbf{C}=\mathbf{C}(E)$\rm{:}] the Banach space of bounded
  and continuous real-valued functions $f$ on $E$ with the norm
  $\norm{f}_{\mathbf{C}} \set \sup_{x\in E} \abs{f(x)}$.
\end{description}

\section{Complete Radner equilibrium}
\label{sec:complete-equilibrium}

We consider an economy with $M$ agents and a single consumption good.
The uncertainty and the information flow are modeled by a complete
filtered probability space $(\Omega, \mathcal{F}_1, \mathbf{F} =
(\mathcal{F}_t)_{t\in [0,1]}, \mathbb{P})$.  The agents choose
cumulative consumption processes $C = (C_t)_{t\in [0,1]}$ from a set
$\mathcal{C}$ of optional non-decreasing processes. Their preferences
regarding consumption are specified in terms of (expected) utility
functionals:
\begin{displaymath}
  \map{\mathbb{U}^m}{\mathcal{C}}{\mathbb{R}\cup \braces{-\infty}} \cup
  \braces{\infty}, \; m=1,\dots,M.
\end{displaymath}
The agents receive cumulative income processes $I^m \in \mathcal{C}$,
$m=1,\dots,M$.

The financial market consists of a num\'eraire and $J$ stocks. All
payments are made in the consumption units. The num\'eraire pays the
notional $\Psi>0$ at maturity $t=1$; for instance, if $\Psi=1$, then
the num\'eraire is the zero-coupon bond. The stocks pay the dividend
rates $\theta=(\theta^j_t)$ and the terminal dividends
$\Theta=(\Theta^j)$. Thus, the total dividend paid by $j$th stock up
to time $t\in [0,1]$ is
\begin{displaymath}
  D^j_t = \int_0^t \theta^j_u du + \Theta^j \ind{t=1}. 
\end{displaymath}
The prices of the assets are determined \emph{endogenously} by the
equilibrium mechanism specified in Definition~\ref{def:1}.

By a \emph{$(B,S)$-market} we call an optional process $B=(B_t)>0$ and
a $J$-dimensional semimartingale $S=(S^j_t)$ having the terminal
values
\begin{equation}
  \label{eq:1}
  B_1 = \Psi \quad \text{and} \quad S^j_1 = 
  \frac{\Theta^j}{B_1} + \int_0^1\frac{\theta^j_u}{B_u} du, \; j=1,\dots,J. 
\end{equation}
Here $B$ denotes the price process of the num\'eraire and $S^jB$
represents the wealth process of the buy-and-hold strategy for $j$th
stock; equivalently, $S^j$ is the \emph{discounted} value of this
strategy.

A probability measure $\mathbb{Q}$ is an \emph{equivalent martingale
  measure for $S$} if $\mathbb{Q}\sim \mathbb{P}$ and $S$ is a
$\mathbb{Q}$-martingale.  We call a $(B,S)$-market \emph{complete} if
there is only one such $\mathbb{Q}$; this is the case if and only if
every local martingale under $\mathbb{Q}$ is a stochastic integral
with respect to $S$, see \citet[Section XI.1(a)]{Jacod:79}.

We now introduce the main object of our study.

\begin{Definition}
  \label{def:1}
  A pair $((B,S), (\widehat C^m)_{m=1,\dots,M})$, consisting of a
  $(B,S)$-market and consumptions $\widehat C^m \in \mathcal{C}$,
  $m=1,\dots,M$, is a \emph{complete Radner equilibrium} if
  \begin{enumerate}
  \item The $(B,S)$-market is complete; denote by $\mathbb{Q}$ the
    unique equivalent martingale measure for $S$.
  \item The \emph{clearing condition} holds:
    \begin{equation}
      \label{eq:2}
      \sum_{m=1}^M \widehat C^m_t = \sum_{m=1}^M I^m_t, \; t\in
      [0,1].
    \end{equation}
  \item For every $m=1,\dots,M$ we have
    \begin{align*}
      \abs{\mathbb{U}^m(\widehat C^m)} +
      \mathbb{E}^\mathbb{Q}[\int_0^1 \frac{dI^m_t}{B_t}] < \infty,
    \end{align*}
    the consumption $\widehat C^m$ satisfies the \emph{budget
      constraint}:
    \begin{displaymath}
      \mathbb{E}^\mathbb{Q}[\int_0^1 \frac{d\widehat C^m_t}{B_t}] =
      \mathbb{E}^\mathbb{Q}[\int_0^1 \frac{dI^m_t}{B_t}], 
    \end{displaymath}
    and $\mathbb{U}^m(\widehat C^m)\geq \mathbb{U}^m(C)$ for every
    consumption $C\in \mathcal{C}$ satisfying same budget constraint:
    \begin{displaymath}
      \mathbb{E}^\mathbb{Q}[\int_0^1 \frac{dC_t}{B_t}] =
      \mathbb{E}^\mathbb{Q}[\int_0^1 \frac{dI^m_t}{B_t}]. 
    \end{displaymath}
  \end{enumerate}
\end{Definition}

\begin{Remark}
  \label{rem:1}
  Under Radner equilibrium, for $m$th agent, the transfer of the
  income $I^m$ into the optimal consumption $\widehat C^m$ is
  accomplished by the dynamic trading in stocks. The predictable
  process $\widehat H^m$ of the number of stocks is determined by the
  integral representation:
  \begin{displaymath}
    \int_0^t \widehat H^m_u dS_u = \mathbb{E}^{\mathbb{Q}}[\int_0^1
    \frac{dI^m_u - d\widehat C^m_u}{B_u}|\mathcal{F}_t], \quad t\in
    [0,1].  
  \end{displaymath}
  Observe that the clearing condition~\eqref{eq:2} for consumptions
  yields the clearing condition for stocks:
  \begin{displaymath}
    \sum_{m=1}^M \widehat H^m_t = 0, \; t\in  [0,1].  
  \end{displaymath}
  This condition is usually a part of the definition of Radner
  equilibrium; see, e.g., Definition~5.1 in Section~4.5 of
  \cite{KaratShr:98}.
\end{Remark}

For the convenience of future references we also recall the definition
of the (static) Arrow-Debreu equilibrium.

\begin{Definition}
  \label{def:2}
  A pair $(P, (\widehat C^m)_{m=1,\dots,M})$, consisting of a positive
  optional \emph{consumption price} process $P>0$ and consumptions
  $\widehat C^m \in \mathcal{C}$, $m=1,\dots,M$, is an
  \emph{Arrow-Debreu equilibrium} if the clearing
  condition~\eqref{eq:2} holds and for every $m=1,\dots,M$ we have
  \begin{align*}
    \abs{\mathbb{U}^m(\widehat C^m)} + \mathbb{E}[\int_0^1
    P_t{dI^m_t}] < \infty,
  \end{align*}
  the consumption $\widehat C^m$ satisfies the \emph{budget
    constraint}:
  \begin{displaymath}
    \mathbb{E}[\int_0^1 P_t d\widehat C^m_t] =
    \mathbb{E}[\int_0^1 P_t dI^m_t], 
  \end{displaymath}
  and $\mathbb{U}^m(\widehat C^m)\geq \mathbb{U}^m(C)$ for every
  consumption $C\in \mathcal{C}$ satisfying same budget constraint:
  \begin{displaymath}
    \mathbb{E}[\int_0^1 P_tdC_t] =
    \mathbb{E}[\int_0^1 P_tdI^m_t]. 
  \end{displaymath}
\end{Definition}

The relations between these two types of equilibria are summarized in
the following lemmas.

\begin{Lemma}
  \label{lem:1}
  Let $((B,S),(\widehat C^m)_{m=1,\dots,M})$ be a complete Radner
  equilibrium. Let $\mathbb{Q}$ be the equivalent martingale measure
  for $S$, denote by $Z$ the density process of $\mathbb{Q}$ under
  $\mathbb{P}$, and set $P \set Z/B$. Then $\mathbb{E}[P_1\Psi] =1$
  and the pair $(P,(\widehat C^m)_{m=1,\dots,M})$ is an Arrow-Debreu
  equilibrium.
\end{Lemma}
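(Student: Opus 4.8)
I propose to verify the two claims directly from the defining relations of the $(B,S)$-market and of the Radner equilibrium; the only substantive ingredient is a change-of-num\'eraire identity that converts $\mathbb{Q}$-expectations of $B$-discounted payments into $\mathbb{P}$-expectations of $P$-weighted payments. Note first that, since $\mathbb{Q}\sim\mathbb{P}$, the density process $Z$ is a strictly positive $\mathbb{P}$-martingale with $Z_0=1$, so together with $B>0$ this makes $P=Z/B$ a strictly positive optional process, i.e.\ an admissible consumption price; here the completeness of the market is used only to guarantee that $\mathbb{Q}$, hence $Z$ and $P$, are well defined. The first assertion is then immediate: by~\eqref{eq:1} we have $B_1=\Psi$, so $P_1\Psi = Z_1$ and $\mathbb{E}[P_1\Psi]=\mathbb{E}[Z_1]=1$.

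The core step is to show that for every $C\in\mathcal{C}$,
\begin{equation}
  \label{eq:kram-nc}
  \mathbb{E}^{\mathbb{Q}}\Bigl[\int_0^1 \frac{dC_t}{B_t}\Bigr]
  = \mathbb{E}\Bigl[\int_0^1 P_t\, dC_t\Bigr]\in[0,\infty].
\end{equation}
By the abstract Bayes rule the left-hand side equals $\mathbb{E}\bigl[Z_1\int_0^1 B_t^{-1}\,dC_t\bigr]$. Since $t\mapsto B_t^{-1}$ is optional and $C$ is optional non-decreasing, the optional projection theorem for integrals with respect to optional increasing processes allows me to replace the $\mathcal{F}_1$-measurable factor $Z_1$ inside the integral by its optional projection, which is the martingale $Z_t$ (equivalently, this is the tower property applied $dC_t$-slice by slice, using $\mathbb{E}[Z_1\mid\mathcal{F}_t]=Z_t$); one may also argue by Fubini when $C$ is decomposed into its absolutely continuous part and its jump at $t=1$. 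This yields $\mathbb{E}\bigl[\int_0^1 Z_tB_t^{-1}\,dC_t\bigr]=\mathbb{E}\bigl[\int_0^1 P_t\,dC_t\bigr]$. Because every term is non-negative, \eqref{eq:kram-nc} holds in $[0,\infty]$ with no a priori integrability hypothesis, and this is exactly the place where a little care about measurability is needed — the rest of the argument is bookkeeping.

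It remains to feed \eqref{eq:kram-nc} into Definition~\ref{def:1}(3) with $C=\widehat C^m$ and $C=I^m$. The finiteness condition $\abs{\mathbb{U}^m(\widehat C^m)}+\mathbb{E}[\int_0^1 P_t\,dI^m_t]<\infty$ of Definition~\ref{def:2} then coincides with its Radner counterpart; the Arrow--Debreu budget constraint $\mathbb{E}[\int_0^1 P_t\,d\widehat C^m_t]=\mathbb{E}[\int_0^1 P_t\,dI^m_t]$ is the Radner budget constraint re-expressed via~\eqref{eq:kram-nc}; and, again by~\eqref{eq:kram-nc}, the $\mathbb{P}$-budget set $\{C\in\mathcal{C}:\ \mathbb{E}[\int_0^1 P_t\,dC_t]=\mathbb{E}[\int_0^1 P_t\,dI^m_t]\}$ coincides with the $\mathbb{Q}$-budget set $\{C\in\mathcal{C}:\ \mathbb{E}^{\mathbb{Q}}[\int_0^1 B_t^{-1}\,dC_t]=\mathbb{E}^{\mathbb{Q}}[\int_0^1 B_t^{-1}\,dI^m_t]\}$, so the maximality of $\mathbb{U}^m(\widehat C^m)$ over the latter is precisely its maximality over the former. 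Finally, the clearing condition~\eqref{eq:2} is verbatim the same in both definitions, so it is inherited from the Radner equilibrium, and $(P,(\widehat C^m)_{m=1,\dots,M})$ is an Arrow--Debreu equilibrium. The main (indeed only) obstacle is the justification of~\eqref{eq:kram-nc} at the level of generality of arbitrary optional non-decreasing $C$.
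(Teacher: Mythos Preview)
Your argument is correct and follows exactly the route taken in the paper: the proof there reduces both Lemmas~\ref{lem:1} and~\ref{lem:2} to the single identity $\mathbb{E}^{\mathbb{Q}}[\int_0^1 B_t^{-1}\,dC_t]=\mathbb{E}[Z_1\int_0^1 B_t^{-1}\,dC_t]=\mathbb{E}[\int_0^1 (Z_t/B_t)\,dC_t]$ for optional $B>0$ and optional non-decreasing $C$, which is precisely your~\eqref{eq:kram-nc}. You have simply spelled out the optional-projection justification and the bookkeeping that the paper leaves implicit.
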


\begin{Lemma}
  \label{lem:2}
  Let $(P,(\widehat C^m)_{m=1,\dots,M})$ be an Arrow-Debreu
  equilibrium such that $\mathbb{E}[P_1\Psi] =1$. Let $\mathbb{Q}$ be
  the probability measure with the density process $Z_t \set
  \mathbb{E}[P_1\Psi|\mathcal{F}_t]$ under $\mathbb{P}$, set $B\set
  Z/P$, and suppose that the $J$-dimensional martingale $S=(S^j_t)$
  under $\mathbb{Q}$ with the terminal value~\eqref{eq:1} is
  well-defined. If the $(B,S)$-market is complete, then
  $((B,S),(\widehat C^m)_{m=1,\dots,M})$ is a complete Radner
  equilibrium.
\end{Lemma}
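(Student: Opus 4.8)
The plan is to verify directly the three requirements of Definition~\ref{def:1} for the pair $((B,S),(\widehat C^m)_{m=1,\dots,M})$, the only substantive ingredient being a change-of-num\'eraire identity that converts the $\mathbb{Q}$-budget constraint discounted by $B$ into the Arrow-Debreu budget constraint with price $P$. I start with conditions (i) and (ii). Since $P>0$ and $\Psi>0$, the terminal density $Z_1=P_1\Psi$ is strictly positive, and $\mathbb{E}[Z_1]=\mathbb{E}[P_1\Psi]=1$ by hypothesis, so $\mathbb{Q}$ is a probability measure with $\mathbb{Q}\sim\mathbb{P}$. The martingale $Z$ with $Z_1>0$ is then strictly positive on $[0,1]$, hence $B\set Z/P$ is a well-defined positive optional process with $B_1=Z_1/P_1=\Psi$; together with the terminal value of $S$ prescribed in~\eqref{eq:1}, which holds by assumption, $(B,S)$ is a $(B,S)$-market. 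As $S$ is a $\mathbb{Q}$-martingale by construction and $\mathbb{Q}\sim\mathbb{P}$, the measure $\mathbb{Q}$ is an equivalent martingale measure for $S$; since the $(B,S)$-market is assumed complete, $\mathbb{Q}$ is then the unique such measure, which is condition (i). Condition (ii) is the clearing condition~\eqref{eq:2}, which is part of the definition of the Arrow-Debreu equilibrium $(P,(\widehat C^m))$.

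Next I would establish the identity
\begin{equation}
  \label{eq:lem2-id}
  \mathbb{E}^{\mathbb{Q}}\Bigl[\int_0^1 \frac{dC_t}{B_t}\Bigr]
  = \mathbb{E}\Bigl[\int_0^1 P_t\,dC_t\Bigr], \qquad \text{for every } C\in\mathcal{C}.
\end{equation}
Writing $\mathbb{E}^{\mathbb{Q}}[\int_0^1 dC_t/B_t]=\mathbb{E}[Z_1\int_0^1 dC_t/B_t]$ and noting that $1/B$ is optional while $Z_t=\mathbb{E}[Z_1|\mathcal{F}_t]$ is the optional projection of the constant process $Z_1$, so that the optional projection of $Z_1/B$ equals $Z_\cdot/B_\cdot$, a standard optional-projection identity for integrals against an optional increasing process gives $\mathbb{E}[Z_1\int_0^1 dC_t/B_t]=\mathbb{E}[\int_0^1 (Z_t/B_t)\,dC_t]$; substituting $Z_t/B_t=P_t$ yields~\eqref{eq:lem2-id}. (Equivalently, one proves~\eqref{eq:lem2-id} by a monotone-class argument, verifying it first on integrands generating the optional $\sigma$-field, where it reduces to the martingale property of $Z$.)

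It then remains to verify condition (iii) for each fixed $m$. Taking $C=I^m$ in~\eqref{eq:lem2-id} gives $\mathbb{E}^{\mathbb{Q}}[\int_0^1 dI^m_t/B_t]=\mathbb{E}[\int_0^1 P_t\,dI^m_t]<\infty$, which together with $\abs{\mathbb{U}^m(\widehat C^m)}<\infty$ (from the Arrow-Debreu equilibrium) is exactly the finiteness requirement in Definition~\ref{def:1}(iii). Comparing~\eqref{eq:lem2-id} for $C=\widehat C^m$ and for $C=I^m$ shows that the $\mathbb{Q}$-budget constraint $\mathbb{E}^{\mathbb{Q}}[\int_0^1 d\widehat C^m_t/B_t]=\mathbb{E}^{\mathbb{Q}}[\int_0^1 dI^m_t/B_t]$ is equivalent to the Arrow-Debreu budget constraint $\mathbb{E}[\int_0^1 P_t\,d\widehat C^m_t]=\mathbb{E}[\int_0^1 P_t\,dI^m_t]$, which holds; and, again by~\eqref{eq:lem2-id}, a consumption $C\in\mathcal{C}$ satisfies $\mathbb{E}^{\mathbb{Q}}[\int_0^1 dC_t/B_t]=\mathbb{E}^{\mathbb{Q}}[\int_0^1 dI^m_t/B_t]$ if and only if it satisfies $\mathbb{E}[\int_0^1 P_t\,dC_t]=\mathbb{E}[\int_0^1 P_t\,dI^m_t]$, so the $\mathbb{Q}$-budget set and the Arrow-Debreu budget set coincide. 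Hence the optimality $\mathbb{U}^m(\widehat C^m)\ge\mathbb{U}^m(C)$ over the $\mathbb{Q}$-budget set is precisely the optimality of $\widehat C^m$ in the Arrow-Debreu equilibrium. This verifies all three conditions of Definition~\ref{def:1}.

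The main obstacle is the identity~\eqref{eq:lem2-id}; everything else is bookkeeping. The delicate point there is the measurability step that replaces the terminal density $Z_1$ inside the $dC$-integral by the martingale $Z_t$: this is legitimate precisely because both the integrand $1/B$ and the integrator $C$ are optional, and it is the only place where the hypotheses on optionality of $P$ (hence of $B$) and of the consumption streams are used.
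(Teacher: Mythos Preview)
Your proposal is correct and follows exactly the approach in the paper: the proof there is stated simply as a direct consequence of the identity
\[
  \mathbb{E}^{\mathbb{Q}}\Bigl[\int_0^1 \frac{dC_t}{B_t}\Bigr]
  = \mathbb{E}\Bigl[Z_1\int_0^1 \frac{dC_t}{B_t}\Bigr]
  = \mathbb{E}\Bigl[\int_0^1 \frac{Z_t}{B_t}\,dC_t\Bigr],
\]
valid for optional $B>0$, optional non-decreasing $C$, and the density process $Z$ of $\mathbb{Q}$; this is precisely your~\eqref{eq:lem2-id}, and the remaining verification of Definition~\ref{def:1} is the bookkeeping you carry out.
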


The proofs are direct consequences of the identities
\begin{displaymath}
  \mathbb{E}^{\mathbb{Q}}[\int_0^1 \frac{dC_t}{B_t}] =
  \mathbb{E}[Z_1\int_0^1 \frac{dC_t}{B_t}] = 
  \mathbb{E}[\int_0^1 \frac{Z_t}{B_t} dC_t],
\end{displaymath}
which hold for an optional process $B>0$, an optional non-decreasing
process $C$, and a probability measure $\mathbb{Q}\sim \mathbb{P}$
with the density process $Z$.

\section{Model's primitives}
\label{sec:model-primitives}

The primitives of our model are defined in terms of the
$d$-dimensional diffusion
\begin{equation}
  \label{eq:3}
  X_t = X_0+\int_0^t b(s,X_s)ds + \int_0^t\sigma(s,X_s) dW_s, \; t\in [0,1]. 
\end{equation}
Here $X_0\in \mathbb{R}^d$, $W$ is a Brownian motion with values in
$\mathbb{R}^d$ defined on a complete filtered probability space
$(\Omega, \mathcal{F}_1, \mathbf{F} = (\mathcal{F}_t)_{t\in [0,1]},
\mathbb{P})$, and the drift
$\map{b=b(t,x)}{[0,1]\times\mathbb{R}^d}{\mathbb{R}^d}$ and the
volatility $\map{\sigma=\sigma(t,x)}{[0,1]\times
  \mathbb{R}^d}{\mathbb{R}^{d\times d}}$ are such that for all
$i,j=1,\dots,d$:
\begin{enumerate}[label=(A\arabic{*}), ref=(A\arabic{*})]
\item \label{item:1} the maps $t\mapsto b^i(t,\cdot)$ of $[0,1]$ to
  $\mathbf{L}_{\infty}= \mathbf{L}_{\infty}(\mathbb{R}^d)$ and
  $t\mapsto \sigma^{ij}(t,\cdot)$ of $[0,1]$ to $\mathbf{C} =
  \mathbf{C}(\mathbb{R}^d)$ are analytic on $(0,1)$ and H\"older
  continuous on $[0,1]$.  For $t\in [0,1]$ and $x\in \mathbb{R}^d$ the
  matrix $\sigma(t,x)$ has the inverse $\sigma^{-1}(t,x)$ and there
  exists a constant $N>0$, same for all $t$ and $x$, such that
  \begin{equation}
    \label{eq:4}
    \abs{\sigma^{-1}(t,x)} \leq N.
  \end{equation} 
  Moreover, there exists a strictly increasing function $\omega =
  (\omega(\epsilon))_{\epsilon>0}$ such that $\omega(\epsilon)\to 0$
  as $\epsilon\downarrow 0$ and, for all $t\in [0,1]$ and all $x,y\in
  \mathbb{R}^d$,
  \begin{displaymath}
    \abs{\sigma(t,x) - \sigma(t,y)} \leq \omega(\abs{x-y}).
  \end{displaymath}
  \setcounter{item}{\value{enumi}}
\end{enumerate}
In view of~\eqref{eq:4}, we can assume that the filtration
$\mathbf{F}$ is generated by $X$:
\begin{equation}
  \label{eq:5}
  \mathbf{F} = \mathbf{F}^X \set (\mathcal{F}^X_t)_{t\in [0,1]},
\end{equation}
where, as usual, $\mathcal{F}_t^X$ denotes the $\sigma$-field
generated by $(X_s)_{s\leq t}$ and complemented with $\mathbb{P}$-null
sets.

\begin{Remark}
  \label{rem:2}
  With respect to $x$, the conditions in \ref{item:1} are,
  essentially, the minimal classical assumptions guaranteeing the
  existence and the uniqueness of the weak solution to~\eqref{eq:3};
  see \citet[Theorem~7.2.1]{StrVarad:06} and
  \citet{Kryl:69,Kryl:72}. This weak solution is also well-defined
  when $b$ and $\sigma$ are only measurable functions with respect to
  $t$.  Example~2.5 in \cite{KramPred:14} shows that the requirement
  on $\sigma=\sigma(t,x)$ to be $t$-analytic is, however, essential
  for our main Theorem~\ref{th:1} to hold.
\end{Remark}

\begin{Remark}
  \label{rem:3}
  Let us compare our assumptions on the diffusion $X$ with those in
  the literature.  In the pioneering paper \cite{AnderRaim:08}, $X$ is
  a Brownian motion.  In \cite{HugMalTrub:12} the conditions are
  imposed on the diffusion coefficients $b=b(t,x)$ and $\sigma =
  \sigma(t,x)$ and on the transition density $p=p(t,x,s,y)$. In the
  main body of~\cite{HugMalTrub:12}, it is assumed that $b$, $\sigma$,
  and $p$ are analytic functions with respect to all their arguments.
  In the technical appendix to \cite{HugMalTrub:12}, these functions
  are required to be analytic with respect to $t$ and $s$ and
  $5$-times ($7$-times for $p$) continuously differentiable with
  respect to $x$ and $y$.  In \cite{RiedHerz:13} the diffusion
  coefficients $b$ and $\sigma$ do not depend on $t$, the matrix
  $\sigma$ is invertible and $b=b(x)$, $\sigma=\sigma(x)$, and
  $\sigma^{-1} = \sigma^{-1}(x)$ are bounded and analytic functions.

  From the point of view of applications, the most severe constraint
  of our setup is the boundedness assumption on the diffusion
  coefficients. This condition was used in the backward martingale
  representation theorem in~\cite{KramPred:14}, on which this paper
  relies, to facilitate references to the results from elliptic PDEs.
\end{Remark}

\subsection{Notional and dividends}
\label{sec:stocks-dividends}

From now on, the uncertainty and the information flow are modeled by
the filtered probability space $(\Omega, \mathcal{F}_1, \mathbf{F} =
(\mathcal{F}_t)_{t\in [0,1]}, \mathbb{P})$ with the filtration
$\mathbf{F}$ generated by the diffusion $X$ from~\eqref{eq:3}.

We assume that the notional $\Psi$, for the num\'eraire, and the
intermediate dividend rates $\theta=(\theta^j_t)$ and the terminal
dividends $\Theta = (\Theta^j)$, for the stocks, have the form:
\begin{align*}
  \Psi & = G(X_1) e^{\int_0^1 q(s,X_s)ds}, \\
  \theta^j_t &= f^j(t,X_t) e^{\int_0^t p^j(s,X_s)ds}, \; t\in [0,1],\\
  \Theta^j &= G(X_1)F^j(X_1)e^{\int_0^1 p^j(s,X_s)ds},\; j=1,\dots,J,
\end{align*}
where the functions $G,F^j$ on $\mathbb{R}^d$ and $q,f^j,p^j$ on
$[0,1]\times \mathbb{R}^d$ satisfy
\begin{enumerate}[label=(A\arabic{*}), ref=(A\arabic{*})]
  \setcounter{enumi}{\value{item}}
\item \label{item:2} The functions $G$ and $F^j$ are weakly
  differentiable, $G>0$, the Jacobian matrix $\left(F^j_{x_i} \set
    \frac{\partial F^j}{\partial x_i}\right)_{j=1,\dots,J,\;
    i=1,\dots,d}$ has rank $d$ almost surely under the Lebesgue
  measure on $\mathbb{R}^d$, and there is a constant $N>0$ such that
  \begin{displaymath}
    \abs{G_{x_i}(x)} +  \abs{F^j_{x_i}(x)} \leq e^{N(1+\abs{x})}, 
    \; x\in \mathbb{R}^d,  
  \end{displaymath}
  and such that $t\mapsto e^{-N \abs{\cdot}} f^j(t,\cdot) \set
  \bigl(e^{-N \abs{x}} f^j(t,x)\bigr)_{x\in \mathbf{R}^d}$ and
  $t\mapsto q(t,\cdot)$, $t\mapsto p^j(t,\cdot)$ are H\"older
  continuous maps of $[0,1]$ to $\mathbf{L}_{\infty}$ whose
  restrictions on $(0,1)$ are analytic.
  \setcounter{item}{\value{enumi}}
\end{enumerate}
The expressions for $\Psi$, $\theta^j$, and $\Theta^j$ are similar to
those in~\cite{AnderRaim:08}, where the rate functions $q$ and $p^j$
equal to zero.  In \cite{HugMalTrub:12} and \cite{RiedHerz:13} the
dividend's rate functions $f^j$ are time-independent: $f^j(t,x) =
f^j(x)$.

\begin{Remark}
  \label{rem:4}
  When the diffusion coefficients $\sigma^{ij}$ and $b^i$ and the
  functions $f^j$, $q$, and $p^j$ are also $x$-analytic it is enough
  to assume that the Jacobian matrix $\left(F^j_{x_i}\right)$ has rank
  $d$ only on an open set (equivalently, at just one point), see
  \cite{AnderRaim:08}, \cite{HugMalTrub:12}, and
  \cite{RiedHerz:13}. Without $x$-analyticity this is not possible,
  see Example~2.7 in \cite{KramPred:14}.
\end{Remark}

\begin{Remark}
  \label{rem:5}
  The $t$-analyticity condition on $f^j$ cannot be omitted; see
  Example~2.6 in \cite{KramPred:14} and the technical appendix
  to~\cite{HugMalTrub:12}. We stress that $t\mapsto
  e^{-N\abs{\cdot}}f^j(t,\cdot)$ is analytic as a \emph{map} of
  $(0,1)$ to $\mathbf{L}_{\infty}$; same is true for the rate
  functions $p^j$ and $q$ and the diffusion coefficients $b^i$ and
  $\sigma^{ij}$. This is more than just boundedness and the
  analyticity of $t\mapsto e^{-N\abs{x}}f^j(t,x)$ for every $x\in
  \mathbb{R}^d$. For instance, the map $t\mapsto (e^{-N\abs{x}}
  \sin(te^{x^2}))_{x\in\mathbb{R}^d}$ of $(0,1)$ to
  $\mathbf{L}_\infty$ is not even differentiable. The use of maps is
  essential in the proof of the backward martingale representation
  theorem from~\cite{KramPred:14} based on the theory of analytic
  semigroups. This result plays a key role in our study.
\end{Remark}

\subsection{Preferences and endowments}
\label{sec:preferences}

The agents consume continuously on $[0,1]$ according to an optional
process $\xi=(\xi_t)\geq 0$ of consumption rates and also at maturity
$t=1$ according to a random variable $\Xi\geq 0$ of terminal
wealth. The process of cumulative consumption is thus given by
\begin{displaymath}
  C_t = \int_0^t \xi_s ds + \Xi \ind{t=1}, \; t\in [0,1]. 
\end{displaymath}
The expected utility of $m$th agent has the form:
\begin{displaymath}
  \mathbb{U}^m(C) \set \mathbb{E}[\int_0^1 u^m(t,\xi_t,X_t)  e^{-\int_0^t
    r(s,X_s)ds}dt + U^m(\Xi,X_1) e^{-\int_0^1 r(t,X_t)dt}],
\end{displaymath}
where $r=r(t,x)$ is the ``impatience'' rate, common among the agents,
and $u^m=u^m(t,c,x)$ and $U^m=U^m(c,x)$ are utility functions for
intermediate and terminal consumptions defined for $t\in [0,1]$,
$c\geq 0$, and $x\in \mathbb{R}^d$. These expressions are similar to
those in \cite{AnderRaim:08}, where the impatience rate $r=r(t,x)$
does not depend on $x$.

The income process of $m$th agent is given by
\begin{displaymath}
  I^m_t = \int_0^t \lambda^m_s ds + \Lambda^m \ind{t=1}, \; t\in [0,1],  
\end{displaymath}
where the optional process $\lambda^m$ of income rates and the random
variable $\Lambda^m$ of terminal endowment satisfy
\begin{equation}
  \label{eq:6}
  \lambda^m\geq 0, \; 
  \Lambda^m\geq 0, \; \text{and} \;  \mathbb{P}[I^m_1>0]>0, \; m=1,\dots,M. 
\end{equation}
The total terminal and intermediate incomes are denoted by
\begin{displaymath}
  \Lambda
  \set \sum_{m=1}^M\Lambda^m\quad\text{and}\quad\lambda \set
  \sum_{m=1}^M \lambda^m. 
\end{displaymath}

We shall say that a function $f=f(c)$ on $[0,\infty)$ satisfies
\emph{the Inada conditions} if $f$ is strictly concave, strictly
increasing, and continuously differentiable on $(0,\infty)$ and
$\lim_{c\downarrow 0}f_c(c) = \infty$, $\lim_{c\to \infty}f_c(c) = 0$.
Moreover, $f(0) = \lim_{c\downarrow 0}f(c)$; this limit may equal
$-\infty$.

We impose the following conditions on $r$, $(U^m)$, and $\Lambda$:
\begin{enumerate}[label=(A\arabic{*}), ref=(A\arabic{*})]
  \setcounter{enumi}{\value{item}}
\item \label{item:3} $t\mapsto r(t,\cdot)$ is a H\"older continuous
  map of $[0,1]$ to $\mathbf{L}_{\infty}$ whose restriction on $(0,1)$
  is analytic.
\item \label{item:4} For $x\in \mathbb{R}^d$, the terminal wealth
  utility function $U^m(\cdot,x)$ on $[0,\infty)$ satisfies the Inada
  conditions. On $(0,\infty)\times \mathbb{R}^d$ the derivatives
  $U^m_{cc}$ and $U^m_{cx^i}$ exist and are continuous, $U^m_{cc}<0$,
  and, for some constant $N>0$,
  \begin{align}
    \label{eq:7}
    \abs{U^m(1,x)} &\leq e^{N(1+\abs{x})}, \; x\in \mathbb{R}^d, \\
    \label{eq:8}
    \left(-\frac{cU^m_{cc}}{U^m_c} +
      \frac{\abs{U^m_{cx^i}}}{U^m_c}\right)(c,x) &\leq N, \; (c,x)\in
    (0,\infty)\times \mathbb{R}^d.
  \end{align}
\item \label{item:5} $\Lambda = e^{H(X_1)}$, where the function
  $H=H(x)$ is weakly differentiable, and, for some constant $N>0$,
  \begin{displaymath}
    \abs{H(x)} \leq N(1+\abs{x}) \quad\text{and}\quad\abs{H_{x^i}}\leq
    e^{N(1+\abs{x})}, \; x\in \mathbb{R}^d.
  \end{displaymath}
  \setcounter{item}{\value{enumi}}
\end{enumerate}

\begin{Remark}
  \label{rem:6}
  In the state-homogeneous case, where $U^m(c,x) = U^m(c)$,
  inequality~\eqref{eq:7} holds trivially, while \eqref{eq:8} means
  the boundedness of the risk-aversion coefficient $-cU^m_{cc}/U^m_c$.
  Theorem~\ref{th:2} below shows that the families of functions
  $U=U(c,x)$ satisfying~\ref{item:4} are convex cones closed under
  $\sup$-convolution with respect to $c$.
\end{Remark}

The assumptions on the utility functions $u^m = u^m(t,c,x)$ for
intermediate consumption are bundled with the conditions on the total
income rate $\lambda$.  We assume that
\begin{displaymath}
  \lambda_t = e^{h(t,X_t)}, \; t\in [0,1], 
\end{displaymath}
and that the functions $u^m$ and $h$ are either time homogeneous:
\begin{enumerate}[label=(A\arabic{*}), ref=(A\arabic{*})]
  \setcounter{enumi}{\value{item}}
\item \label{item:6} $u^m(t,c,x) = u^m(c,x)$. For $x\in \mathbb{R}^d$
  the function $u^m(\cdot,x)$ on $[0,\infty)$ satisfies the Inada
  conditions. There is a constant $N>0$ such that
  \begin{equation}
    \label{eq:9}
    \abs{u^m(e^y,x)} \leq e^{N(1+\abs{x}+\abs{y})}, \; (x,y)\in
    \mathbb{R}^d \times \mathbb{R}.  
  \end{equation}
\item \label{item:7} $h(t,x) = h(x)$ and has a linear growth: for some
  $N\geq 0$,
  \begin{displaymath}
    \abs{h(x)} \leq N(1+\abs{x}), \; x\in \mathbb{R}^d. 
  \end{displaymath}
  \setcounter{item}{\value{enumi}}
\end{enumerate}
or they satisfy
\begin{enumerate}[label=(A\arabic{*}), ref=(A\arabic{*})]
  \setcounter{enumi}{\value{item}}
\item \label{item:8} For $(t,x) \in [0,1]\times \mathbb{R}^d$ the
  function $u^m(t,\cdot,x)$ satisfies the Inada conditions. The
  derivatives $u^m_{ct}$ and $u^m_{cc}$ exist and $u^m_{cc}<0$. There
  is a constant $N>0$ such that
  \begin{equation}
    \label{eq:10}
    \abs{u^m(t,e^y,x)} \leq e^{N(1+\abs{x} + \abs{y})}, \; (t,x,y)\in
    [0,1]\times \mathbb{R}^d\times \mathbb{R},
  \end{equation}
  and there is an open set $V\subset (0,\infty)^2$ containing
  $(0,1)\times \braces{1}$ (a \emph{neighborhood} of $(0,1)\times
  \braces{1}$) such that
  \begin{displaymath}
    (t,s)\mapsto (g(t,se^y,x))_{(x,y)\in \mathbb{R}^d\times\mathbb{R}},
  \end{displaymath}
  is a bounded analytic map of $V$ to
  $\mathbf{L}_{\infty}(\mathbb{R}^{d+1})$, where $g=g(t,c,x)$ stands
  for ${u^m_{ct}}/{u^m_c}$, ${cu^m_{cc}}/{u^m_c}$, and
  ${u^m_{c}}/({cu^m_{cc}})$.
\item \label{item:9} $h(t,x) = h_1(t,x) + h_2(x)$, where $t\mapsto
  h_1(t,\cdot)$ is a H\"older continuous map of $[0,1]$ to
  $\mathbf{L}_\infty(\mathbb{R}^{d})$ whose restriction on $(0,1)$ is
  analytic and the function $h_2=h_2(x)$ has a linear growth: for some
  $N\geq 0$,
  \begin{displaymath}
    \abs{h_2(x)} \leq N(1+\abs{x}), \; x\in \mathbb{R}^d. 
  \end{displaymath}  \setcounter{item}{\value{enumi}}
\end{enumerate}
The role of either pair of these assumptions is to imply the
assertions of Lemma~\ref{lem:10} for every aggregate utility function
$u(w) = u(t,c,x;w)$ introduced in Section~\ref{sec:main-results}.

In \cite{HugMalTrub:12} and \cite{RiedHerz:13} the common impatience
rate $r=r(t,x)$ is constant, the utility functions $u^m=u^m(t,c,x)$
for intermediate consumption depend only on $c$: $u^m(t,c,x) =
u^m(c)$, and the total income rate function $h=h(t,x)$ is
time-homogeneous: $h(t,x) = h(x)$.

\begin{Remark}
  \label{rem:7}
  A classical example of a utility function in~\ref{item:8} is
  \begin{equation}
    \label{eq:11}
    u^m(t,c,x) \set e^{\nu^m(t)}  \frac{c^{1-a^m}- 1}{1-a^m} g^m(x), 
  \end{equation}
  where $a^m$ is a positive constant of risk-aversion, $\nu^m =
  \nu^m(t)$ is an analytic function on $(0,1)$ with bounded
  derivative, and $g^m=g^m(x)$ is a positive function with exponential
  growth: $0<g^m(x) \leq Ne^{N\abs{x}}$, for some $N>0$; if $a^m=1$,
  then, by continuity, $u^m(t,c,x) \set e^{\nu^m(t)} (\ln c) g^m(x)$.

  Theorem~\ref{th:2} shows that the families of functions $u=u(t,c,x)$
  satisfying either~\ref{item:6} or~\ref{item:8} are convex cones
  closed under $\sup$-convolution with respect to $c$. In particular,
  one can begin with functions as in~\eqref{eq:11} and build more
  general utility functions by successively taking positive linear
  combinations and applying $\sup$-convolutions with respect to $c$.

  Regarding~\ref{item:8} we also observe that for a function $g=
  g(t,c,x)$ the analyticity of the map $(t,s)\mapsto
  (g(t,se^y,x))_{(x,y)\in \mathbb{R}^d\times\mathbb{R}}$ of a
  neighborhood of $(0,1)\times \braces{1}$ to
  $\mathbf{L}_{\infty}(\mathbb{R}^{d+1})$ readily implies the
  analyticity of the map $(t,c)\to g(t,c,\cdot)$ of $(0,1)\times
  (0,\infty)$ to $\mathbf{L}_{\infty}(\mathbb{R}^d)$. The inverse is
  not true.  For example, the function
  \begin{displaymath}
    g(c) \set \sin (\ln^2(c)), \quad c>0,
  \end{displaymath}
  is analytic and uniformly bounded on $(0,\infty)$. However, the map
  $s\mapsto (g(se^y))_{y\in \mathbb{R}}$ taking values in
  $\mathbf{L}_{\infty}(\mathbb{R}^d)$ is not even continuous at $s =
  1$:
  \begin{align*}
    \limsup_{s\to 1} \sup_{y\in \mathbb{R}}\abs{g(se^{y}) - g(e^{y})}
    & = \limsup_{s\to 1} \sup_{y\in \mathbb{R}} \abs{\sin
      (\ln^2(se^{y})) -
      \sin(\ln^2(e^{y}))} \\
    &= \limsup_{\epsilon\to 0} \sup_{y\in \mathbb{R}} \abs{\sin
      ((\epsilon + y)^2) - \sin(y^2)} = 2.
  \end{align*}
\end{Remark}

\section{Main result}
\label{sec:main-results}

Denote by $\Sigma^M$ the simplex in $\mathbb{R}^M$:
\begin{displaymath}
  \Sigma^M \set \descr{w\in [0,\infty)^M}{\sum_{m=1}^M w^m = 1}.
\end{displaymath}
For a weight $w\in {\Sigma^M}$ define the aggregate utility functions
$U(w) = U(c,x;w)$ and $u(w) = u(t,c,x;w)$, where $(t,c,x)\in
[0,1]\times (0,\infty)\times \mathbb{R}^d$, as the $w$-weighted
$\sup$-convolutions with respect to $c$:
\begin{align*}
  U(c,x;w) &\set \sup\descr{\sum_{m=1}^M w^m U^m(c^m,x)}{c^m\geq 0, \;
    c^1+\dots + c^M = c}
  ,  \\
  u(t,c,x;w) &\set \sup\descr{\sum_{m=1}^M w^m u^m(t,c^m,x)}{c^m\geq
    0, \; c^1+\dots + c^M = c}.
\end{align*}
Theorem~\ref{th:2} shows that the aggregate utility functions $U(w)$
and $u(w)$ satisfy same conditions~\ref{item:4}, \ref{item:6},
and~\ref{item:8} as $U^m$ and $u^m$; in particular, they satisfy the
Inada conditions with respect to $c$.

By $(\Pi^m(w))_{m=1,\dots,M}$ we denote the $w$-weighted Pareto
allocation of $\Lambda$:
\begin{align*}
  \Pi^m(w) &\set 0 \quad \text{if} \quad w^m=0, \\
  w^m U_c^m(\Pi^m(w),X_1) &\set U_c(\Lambda,X_1;w) \quad \text{if}
  \quad w^m>0,
\end{align*}
and by $(\pi^m(w))_{m=1,\dots,M}$ the optional processes of Pareto
consumption rates:
\begin{align*}
  \pi^m(w) &\set 0 \quad \text{if} \quad w^m=0, \\
  w^m u^m_c(t, \pi_t^m(w),X_t) &\set u_c(t,\lambda_t,X_t;w), \; t\in
  [0,1], \quad \text{if} \quad w^m>0.
\end{align*}
The cumulative Pareto consumption processes are given by
\begin{equation}
  \label{eq:12}
  C^m_t(w) \set \int_0^t \pi^m_s(w) ds + \Pi^m(w) \ind{t=1}, \;
  t\in [0,1], \; m=1,\dots,M. 
\end{equation}

We denote by $\mathcal{W}$ the subset of $\Sigma^M$ such that
\begin{displaymath}
  \mathcal{W} \set \descr{w\in \Sigma^M}{\Phi^m(w) = 0, \; m=1,\dots,M},  
\end{displaymath}
where the function $\Phi^m$ on $\Sigma^M$ is given by
\begin{align*}
  \Phi^m(w) & \set \mathbb{E}\bigl[e^{-\int_0^1
    r(t,X_t)dt} U_c(\Lambda,X_1;w)(\Pi^m(w) - \Lambda^m) \\
  & \quad + \int_0^1 e^{-\int_0^t r(s,X_s)ds}
  u_c(t,\lambda_t,X_t;w)(\pi^m_t(w) - \lambda^m_t)dt\bigr].
\end{align*}
These functions are well-defined; see Lemma~\ref{lem:11}.

For $w\in \Sigma^M$ define the martingale $Y(w)$ by
\begin{displaymath}
  Y_t(w)  \set \mathbb{E}[\Psi U_c(\Lambda,X_1;w) e^{-\int_0^1
    r(s,X_s)ds}|\mathcal{F}_t], \; t\in [0,1], 
\end{displaymath}
and the probability measure $\mathbb{Q}(w)\sim \mathbb{P}$ by
\begin{displaymath}
  \frac{d\mathbb{Q}(w)}{d\mathbb{P}} \set \frac{Y_1(w)}{Y_0(w)}.  
\end{displaymath}
We also denote by $B(w)$ the positive optional process
\begin{displaymath}
  B_t(w) \set \frac{Y_t(w)}{u_c(t,\lambda_t,X_t;w)}e^{\int_0^t r(s,X_s)ds}
  \ind{t<1} + \Psi \ind{t=1}, \; t\in [0,1], 
\end{displaymath}
and by $S(w) = (S^j_t(w))$ the $J$-dimensional
$\mathbb{Q}(w)$-martingale with the terminal value
\begin{displaymath}
  S^j_1(w) \set \frac{\Theta^j}{\Psi} + \int_0^1
  \frac{\theta^j_u}{B_u(w)} du, \; j=1,\dots,J.
\end{displaymath}
The integrability conditions needed for the existence of such $Y(w)$
and $S(w)$ are verified in Lemmas~\ref{lem:12} and~\ref{lem:13}.

We now state the main result of the paper.

\begin{Theorem}
  \label{th:1}
  Let the conditions \eqref{eq:5}, \eqref{eq:6},
  \ref{item:1}--\ref{item:5}, and either \ref{item:6}--\ref{item:7} or
  \ref{item:8}--\ref{item:9} hold. Then there is a complete Radner
  equilibrium. The set $\mathcal{W}$ is not empty and belongs to the
  interior of $\Sigma^M$. Every complete Radner equilibrium has the
  form $((B(w),S(w)), C(w))$ for some $w\in \mathcal{W}$ and,
  conversely, $((B(w),S(w)), C(w))$ is a complete Radner equilibrium
  for every $w\in \mathcal{W}$.
\end{Theorem}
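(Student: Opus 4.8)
The plan is to follow the two-step route of \citet{AnderRaim:08}: first produce a static Arrow--Debreu equilibrium by Negishi's representative-agent method, then upgrade it to a complete Radner equilibrium by establishing endogenous completeness. The second step is the hard one; the rest is standard equilibrium theory, organized around the objects $U(w)$, $u(w)$, $\Pi^m(w)$, $\pi^m(w)$, $\Phi^m(w)$, $Y(w)$, $\mathbb{Q}(w)$, $B(w)$, $S(w)$ already introduced, together with Theorem~\ref{th:2} and Lemmas~\ref{lem:10}--\ref{lem:13}. The first thing I would do is identify the Arrow--Debreu equilibria with the points of $\mathcal{W}$. For $w\in\Sigma^M$ set $P_t(w)\set Z_t(w)/B_t(w)$, where $Z(w)$ is the density process of $\mathbb{Q}(w)$; the defining formulas give that $P(w)$ equals, up to a normalizing constant, $u_c(t,\lambda_t,X_t;w)e^{-\int_0^t r(s,X_s)ds}$ on $[0,1)$ and $U_c(\Lambda,X_1;w)e^{-\int_0^1 r(s,X_s)ds}$ at $t=1$, i.e.\ the discounted marginal utility of the $w$-aggregate. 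Strict concavity of the aggregate utilities (the Inada conditions, valid for the aggregates by Theorem~\ref{th:2}) together with the first-order conditions defining $\Pi^m(w)$ and $\pi^m(w)$ show that $C^m(w)$ maximizes $\mathbb{U}^m$ over the budget set priced by $P(w)$, while the clearing condition~\eqref{eq:2} holds because $\sum_m\Pi^m(w)=\Lambda$ and $\sum_m\pi^m(w)=\lambda$. Dividing the individual budget identities by the normalizing constant turns them into the equations $\Phi^m(w)=0$ (finiteness by Lemma~\ref{lem:11}), so $(P(w),C(w))$ is an Arrow--Debreu equilibrium precisely when $w\in\mathcal{W}$; conversely, by strict monotonicity any Arrow--Debreu equilibrium has a Pareto-optimal allocation, hence is supported by some weight $w$, its consumptions are $C^m(w)$, its price is proportional to the $w$-aggregate marginal utility, and the budget identities force $w\in\mathcal{W}$.

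Next I would show $\mathcal{W}\neq\emptyset$ and $\mathcal{W}\subset\interior\Sigma^M$. Non-emptiness is the Negishi fixed-point step: $w\mapsto(\Phi^m(w))_m$ is continuous on $\Sigma^M$ by Lemma~\ref{lem:11}, and the standard arguments of \citet{Dana:93} and \citet{Kram:13} then produce a zero. For interiority, if $w^m=0$ then $\Pi^m(w)=0$ and $\pi^m(w)\equiv0$, so
\begin{displaymath}
  \Phi^m(w) = -\mathbb{E}\Bigl[e^{-\int_0^1 r(t,X_t)dt}U_c(\Lambda,X_1;w)\Lambda^m + \int_0^1 e^{-\int_0^t r(s,X_s)ds}u_c(t,\lambda_t,X_t;w)\lambda^m_t\,dt\Bigr] < 0,
\end{displaymath}
because $r$ is bounded by~\ref{item:3}, the marginal utilities are strictly positive, and $\mathbb{P}[I^m_1>0]>0$ by~\eqref{eq:6}; hence no boundary point of $\Sigma^M$ lies in $\mathcal{W}$.

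The decisive step, which I expect to be the main obstacle, is endogenous completeness: for each $w\in\mathcal{W}$ the $(B(w),S(w))$-market is complete. Here I would invoke the backward martingale representation theorem of \citet{KramPred:14}. By Theorem~\ref{th:2} the aggregate utilities $u(w)$, $U(w)$ inherit the structural conditions \ref{item:4} and \ref{item:6} or~\ref{item:8} from the $u^m$, $U^m$, and Lemmas~\ref{lem:12} and~\ref{lem:13} guarantee that $Y(w)$ and $S(w)$ are well defined; one then checks that the data feeding $S(w)$ --- the diffusion~\eqref{eq:3} with coefficients as in~\ref{item:1} (bounded, uniformly continuous and uniformly elliptic $\sigma$, measurable $b$, $t$-analytic on $(0,1)$ and H\"older up to $\{0,1\}$), the dividend terms of~\ref{item:2} with the full-rank terminal Jacobian $(F^j_{x_i})$, the discount rates of~\ref{item:3}, and the aggregate marginal utilities --- satisfy the hypotheses of that theorem, which then yields that every $\mathbb{Q}(w)$-local martingale is a stochastic integral with respect to $S(w)$. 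The delicate points are that the path-dependent factors $e^{\int_0^t q(s,X_s)ds}$, $e^{\int_0^t p^j(s,X_s)ds}$, $e^{-\int_0^t r(s,X_s)ds}$ must be absorbed into the payoffs (or the state suitably enlarged) without destroying $t$-analyticity, and that the rank-$d$ condition on $(F^j_{x_i})$ must be propagated through the $w$-aggregate marginal utility to a full-rank volatility matrix of $S(w)$; this is exactly where the $t$-analyticity and the inputs from elliptic PDEs are used.

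Finally I would assemble the pieces. For $w\in\mathcal{W}$ the first step gives an Arrow--Debreu equilibrium $(P(w),C(w))$ with $\mathbb{E}[P_1(w)\Psi]=1$ (the normalization is built into $d\mathbb{Q}(w)/d\mathbb{P}=Y_1(w)/Y_0(w)$), and since $B(w)=Z(w)/P(w)$ and $S(w)$ carry the terminal values~\eqref{eq:1}, Lemma~\ref{lem:2} together with the completeness just proved shows $((B(w),S(w)),C(w))$ is a complete Radner equilibrium; non-emptiness of $\mathcal{W}$ then gives existence. Conversely, given any complete Radner equilibrium $((B,S),(\widehat C^m))$, Lemma~\ref{lem:1} produces an Arrow--Debreu equilibrium with price $P$ and $\mathbb{E}[P_1\Psi]=1$; by the first step $\widehat C^m=C^m(w)$ and $P=P(w)$ for some $w\in\mathcal{W}$, hence (using $B_1=\Psi=B_1(w)$, so $Z_1=P_1B_1=P_1(w)B_1(w)=Z_1(w)$ and therefore $Z=Z(w)$) one gets $\mathbb{Q}=\mathbb{Q}(w)$, $B=Z/P=B(w)$, and $S=S(w)$ by uniqueness of the $\mathbb{Q}(w)$-martingale with terminal value~\eqref{eq:1}. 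Combined with the interiority from the second step, this is precisely the assertion of the theorem.
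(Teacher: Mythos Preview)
Your proposal is correct and follows essentially the same two-step architecture as the paper's proof in Section~\ref{sec:proof-th:1}: first characterize the Arrow--Debreu equilibria via the Negishi weights in $\mathcal{W}$ using the criteria of \citet{Kram:13} (the paper's Lemma~\ref{lem:11}), then establish endogenous completeness for each such weight by verifying the hypotheses of the backward martingale representation theorem from \citet{KramPred:14} (the paper's Lemma~\ref{lem:13}), with Theorem~\ref{th:2} supplying the needed inheritance of \ref{item:4} and \ref{item:6}/\ref{item:8} by the aggregate utilities; the assembly through Lemmas~\ref{lem:1} and~\ref{lem:2} is identical. Your direct argument for $\mathcal{W}\subset\interior\Sigma^M$ and your identification of where the path-dependent rate factors and the rank condition on $(F^j_{x_i})$ enter are exactly the points the paper handles in Lemma~\ref{lem:13} by writing $Y_1(w)$ and $S^j_1(w)$ in the form required by \citet[Theorem~2.3]{KramPred:14}.
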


The proof is given in Section~\ref{sec:proof-th:1} and relies on the
criteria for the existence of Arrow-Debreu equilibria from
\cite{Kram:13}, on the backward martingale representation result from
\cite{KramPred:14}, and on the study of utility functions in
Section~\ref{sec:conv-funct} below.

\section{Convex cones of utility functions closed under
  $\sup$-convolution}
\label{sec:conv-funct}

Let $\mathcal{U}$ be a family of real-valued (utility) functions $u =
u(t,c,x)$ on $[0,1]\times (0,\infty)\times \mathbb{R}^d$ which are
concave with respect to $c$.  We are interested in $\mathcal{U}$ being
a convex cone closed under the operations of $\sup$-convolution with
respect to $c$: for every $u$, $u_1$, and $u_2$ in $\mathcal{U}$ and
every constant $a>0$ the functions $au$, $u_1 + u_2$, $(u_1 \oplus_c
u_2)$ belong to $\mathcal{U}$, where
\begin{equation}
  \label{eq:13}
  (u_1 \oplus_c u_2)(t,c,x) \set \sup\descr{u_1(t,c_1,x) +
    u_2(t,c_2,x)}{c_i> 0, \; c_1 + c_2 = c}.
\end{equation}

Motivated by the proof of Theorem~\ref{th:1} this property will be
established for the following families of functions:
\begin{description}
\item[$\mathcal{U}_1$] consists of measurable functions $u=u(t,c,x)$
  on $[0,1]\times (0,\infty)\times \mathbb{R}^d$ which satisfy the
  Inada conditions with respect to $c$ and such that, for some
  constant $N=N(u)>0$,
  \begin{equation}
    \label{eq:14}
    \abs{u(t,e^y,x)} \leq e^{N(1+\abs{x}+\abs{y})}, \; (t,x,y)\in
    [0,1]\times\mathbb{R}^d\times \mathbb{R}. 
  \end{equation}
\item[$\mathcal{U}_2$] consists of functions $u\in \mathcal{U}_1$ such
  that the derivatives $u_{cc}$ and $u_{cx^i}$ exist and are
  continuous functions with respect to $(c,x)$, $u_{cc}<0$, and, for
  some constant $N=N(u)>0$,
  \begin{equation}
    \label{eq:15}
    \left(-\frac{cu_{cc}}{u_c} +
      \frac{\abs{u_{cx^i}}}{u_c}\right)(t,c,x) \leq N, \;
    (t,c,x)\in [0,1]\times (0,\infty)\times \mathbb{R}^d. 
  \end{equation}
\item[$\mathcal{U}_3$] consists of functions $u\in \mathcal{U}_1$ such
  that the derivatives $u_{ct}$ and $u_{cc}$ exist, $u_{cc}<0$, and
  \begin{equation}
    \label{eq:16}
    (t,s)\mapsto  \left(g(t,se^y,x)\right)_{(x,y)\in 
      \mathbb{R}^d\times\mathbb{R}}
  \end{equation}
  is a bounded analytic map of some neighborhood of $(0,1)\times
  \braces{1}$ to $\mathbf{L}_{\infty}(\mathbb{R}^{d+1})$, where
  $g=g(t,c,x)$ stands for ${u_{ct}}/{u_c}$, ${cu_{cc}}/{u_c}$, and
  ${u_{c}}/(cu_{cc})$.
\end{description}

\begin{Theorem}
  \label{th:2}
  Each of the families $\mathcal{U}_i$, $i=1,2,3$, is a convex cone
  closed under $\sup$-convolution with respect to $c$.
\end{Theorem}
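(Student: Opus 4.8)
The plan is to check, for each $i=1,2,3$, that $\mathcal{U}_i$ is stable under $u\mapsto au$ with $a>0$, under $(u_1,u_2)\mapsto u_1+u_2$, and under $(u_1,u_2)\mapsto u_1\oplus_c u_2$ --- which is precisely what the claim asks. Stability under $u\mapsto au$ is immediate for all three families: strict concavity, strict monotonicity, continuous differentiability in $c$ and the Inada limits are preserved, \eqref{eq:14} holds with $N(au)=N(u)+\abs{\log a}$ (using $1+\abs{x}+\abs{y}\geq 1$), and the ratios $-cu_{cc}/u_c$, $\abs{u_{cx^i}}/u_c$, $u_{ct}/u_c$, $cu_{cc}/u_c$, $u_c/(cu_{cc})$ in \eqref{eq:15}--\eqref{eq:16} do not change. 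Stability under addition is elementary for $\mathcal{U}_1$ (the Inada properties and \eqref{eq:14} add, with $N=\max(N(u_1),N(u_2))+\log 2$) and for $\mathcal{U}_2$ (the left side of \eqref{eq:15} for $u_1+u_2$ is dominated by the convex combination, with weights $(u_i)_c/((u_1)_c+(u_2)_c)$, of the left sides for $u_1$ and $u_2$, hence is $\leq\max(N(u_1),N(u_2))$); for $\mathcal{U}_3$ it reduces, exactly as the $\sup$-convolution does below, to the stability of a class of analytic $\mathbf{L}_\infty$-valued maps under rational operations. The substance is thus $v\set u_1\oplus_c u_2$.

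I analyze $v$ via the inverse marginal utilities $I_i=I_i(t,p,x)$ defined by $(u_i)_c(t,I_i,x)=p$; by the Inada conditions each $I_i(t,\cdot,x)$ is a continuous, strictly decreasing bijection of $(0,\infty)$ onto itself. For fixed $(t,x)$ the function $c_1\mapsto u_1(t,c_1,x)+u_2(t,c-c_1,x)$ is strictly concave on $(0,c)$ with derivative $\to+\infty$ at $c_1\downarrow 0$ and $\to-\infty$ at $c_1\uparrow c$, so the supremum in \eqref{eq:13} is attained at the unique allocation $(c_1^*,c_2^*)$ with $(u_1)_c(t,c_1^*,x)=(u_2)_c(t,c_2^*,x)$; writing $p$ for this common value, $c_i^*=I_i(t,p,x)$ and $p=p(t,c,x)$ is the unique root of $I_1(t,p,x)+I_2(t,p,x)=c$, i.e.\ $I_1+I_2$ is the inverse marginal of $v$. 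A difference-quotient argument, using that $c\mapsto(c_1^*,c_2^*)$ is continuous with $0\leq c_i^*(c')-c_i^*(c)\leq c'-c$ for $c'>c$, gives $v_c(t,c,x)=p(t,c,x)$; hence $v$ is $C^1$ and strictly concave in $c$ and, $I_1+I_2$ being a bijection of $(0,\infty)$ onto itself, $v(t,\cdot,x)$ satisfies the Inada conditions with $v(t,0,x)=u_1(t,0,x)+u_2(t,0,x)$. Joint measurability of $v$ follows by restricting the supremum to rational $c_1\in(0,c)$, and the sandwich $u_1(t,c/2,x)+u_2(t,c/2,x)\leq v(t,c,x)\leq u_1(t,c,x)+u_2(t,c,x)$ yields \eqref{eq:14}. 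This settles $\mathcal{U}_1$.

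For $\mathcal{U}_2$ and $\mathcal{U}_3$ I differentiate the identities $(u_i)_c(t,I_i,x)=p$ and $v_c(t,I_1+I_2,x)=p$ in $p$, in $x^i$, and (for $\mathcal{U}_3$) in $t$. At the optimal allocation this gives $v_{cc}=1/\bigl(1/(u_1)_{cc}+1/(u_2)_{cc}\bigr)<0$, $v_{cx^i}=-v_{cc}\bigl((u_1)_{cx^i}/(u_1)_{cc}+(u_2)_{cx^i}/(u_2)_{cc}\bigr)$, and (for $\mathcal{U}_3$) the existence of $v_{ct}$, all continuous since $(u_i)_{cc}$, $(u_i)_{cx^i}$ and the allocation maps are. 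Setting $a_i\set(-c(u_i)_{cc}/(u_i)_c)(t,c_i^*,x)$ and $b_i\set(\abs{(u_i)_{cx^i}}/(u_i)_c)(t,c_i^*,x)$, so that $a_i+b_i\leq N\set\max(N(u_1),N(u_2))$, one simplifies to
\begin{displaymath}
  \Bigl(-\frac{cv_{cc}}{v_c}+\frac{\abs{v_{cx^i}}}{v_c}\Bigr)(t,c,x)\leq
  \frac{c_1^*+c_2^*+b_1c_1^*/a_1+b_2c_2^*/a_2}{c_1^*/a_1+c_2^*/a_2}\leq N,
\end{displaymath}
the last step because $N-b_i\geq a_i$ forces $c_1^*(N-b_1)/a_1+c_2^*(N-b_2)/a_2\geq c_1^*+c_2^*=c$. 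Together with the cone operations this proves the assertion for $\mathcal{U}_2$. For $\mathcal{U}_3$ the same computation shows that $v_{ct}/v_c$, $cv_{cc}/v_c$, $v_c/(cv_{cc})$ are rational combinations of the corresponding ratios of $u_1$ and $u_2$ evaluated at $c=I_i$: writing $\theta_i\set I_i/(I_1+I_2)\in(0,1)$ and letting $A_i$, $\Gamma_i$ be the values at $(t,I_i,x)$ of $u_c/(cu_{cc})$ and $u_{ct}/u_c$ formed from $u_i$,
\begin{displaymath}
  \frac{v_c}{cv_{cc}}=\theta_1A_1+\theta_2A_2,\qquad
  \frac{cv_{cc}}{v_c}=\frac{1}{\theta_1A_1+\theta_2A_2},\qquad
  \frac{v_{ct}}{v_c}=\frac{\theta_1\Gamma_1A_1+\theta_2\Gamma_2A_2}{\theta_1A_1+\theta_2A_2}.
\end{displaymath}
Boundedness of all three (and boundedness away from $0$ of the first two) is elementary: by \eqref{eq:16} for $u_i$ the ratio $cu_{cc}/u_c$ is bounded and bounded away from $0$, so $A_1,A_2$ have the same sign with moduli in a fixed compact subinterval of $(0,\infty)$, and $\theta_1A_1+\theta_2A_2$ never degenerates.

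The remaining, and principal, difficulty is to show that the three maps above are \emph{bounded analytic} maps of a neighborhood of $(0,1)\times\braces{1}$ into $\mathbf{L}_\infty(\mathbb{R}^{d+1})$ under the substitution $c=se^y$. I would proceed as follows. (i) Reformulate \eqref{eq:16} as an equivalent condition in the marginal variable $p$: writing $p=\tau e^\pi$, the condition for $u\in\mathcal{U}_3$ is that $\Gamma$, $A$, and the normalized logarithm $\log I(t,\tau e^\pi,x)-\log I(t,e^\pi,x)$ are bounded analytic $\mathbf{L}_\infty$-valued maps of $(t,\tau)$ near $(0,1)\times\braces{1}$; the passage between the $c$- and the $p$-parametrization is a uniformly bi-analytic logarithmic rescaling, since $\partial\log c/\partial\log p=\bigl(cu_{cc}/u_c\bigr)^{-1}$ is bounded and bounded away from $0$, and one invokes an analytic inverse/implicit function theorem in the Banach-space setting. (ii) In the $p$-parametrization $\oplus_c$ is simply $I\mapsto I_1+I_2$; a sum of positive maps that extend holomorphically, with uniform bounds, to a fixed complex conical neighborhood of $(0,\infty)$ does the same, so the class of such $I$'s is manifestly stable, and $\theta_i$, $\Gamma_i$, $A_i$ are then obtained from $I_1$, $I_2$ and the data of $u_1$, $u_2$ by sums, products and reciprocals of maps bounded away from $0$ --- operations under which bounded analytic $\mathbf{L}_\infty$-valued maps are closed. (iii) Translate back to the $c$-parametrization. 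Step (i), i.e.\ the precise equivalence of the two formulations together with the closure of analytic $\mathbf{L}_\infty$-valued maps under these algebraic operations, is the technical heart of the proof; everything else is bookkeeping with the explicit formulas above.
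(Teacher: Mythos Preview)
Your handling of $\mathcal{U}_1$ and $\mathcal{U}_2$ is correct and coincides with the paper's Lemmas~\ref{lem:4}--\ref{lem:5}; the formulas and the convex-combination bound are the same.

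For $\mathcal{U}_3$ you take a genuinely different route. The paper never dualizes to the marginal variable~$p$. It stays in the $c$-parametrization and, for fixed $t_0\in(0,1)$, applies the $\mathbf{L}_\infty$-valued implicit function theorem (Theorem~\ref{th:4}) \emph{once}, directly to the first-order condition: writing $h_2(t,s,y,x)$ for the ratio of $f/(c-f)$ at $(t,se^y)$ to its value at $(t_0,e^y)$, one constructs an explicit $h_4(t,s,r,y,x)=\ln\bigl(u^1_c/u^2_c\bigr)$ at arguments depending rationally on $r$, checks $h_4\equiv 0$ along $r=h_2$, and verifies that $\partial_r h_4(t_0,1,1,\cdot)$ lies in $[1/N,N]$. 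This gives the analyticity of the normalized allocation, and then $a$, $1/a$, $q$ for $u$ follow by composition (Theorem~\ref{th:3}). Closure under \emph{addition} is handled separately (Lemma~\ref{lem:3}) by showing that $(t,s)\mapsto u^1_c/(u^1_c+u^2_c)(t,se^y,x)$ is analytic via the integrated representation $u^i_c(t,se^y,x)/u^i_c(t_0,e^y,x)=\exp(-\int_1^s a_i/r\,dr+\int_{t_0}^t q_i\,dr)$.

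Your dual strategy is legitimate, but as written step~(i) is a sketch, not a proof, and it carries two issues worth flagging. First, the normalization in your ``normalized logarithm'' $\log I(t,\tau e^\pi,x)-\log I(t,e^\pi,x)$ kills the $t$-dependence at $\tau=1$; the correct reference point is $(t_0,e^\pi)$, and once you fix this the equivalence you need amounts to inverting $(t,s)\mapsto u_c(t,se^y,x)/u_c(t_0,e^y,x)$ as an $\mathbf{L}_\infty$-valued analytic map --- i.e., exactly Theorem~\ref{th:4} again. So your approach does not bypass the implicit function theorem; it relocates it from the allocation map to the change of parametrization, and you in fact need it in both directions. Second, in step~(ii) the stability of the log-ratio under $I\mapsto I_1+I_2$ and the analyticity of $A_i$, $\Gamma_i$ at $c=I_i(t,\tau e^\pi,x)$ both require you to have already secured the analyticity of $(t,\tau)\mapsto I_i(t,\tau e^\pi,x)/I_i(t_0,e^\pi,x)$ and then to compose back via Theorem~\ref{th:3}; the phrase ``manifestly stable'' hides precisely these two invocations. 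The upshot is that your route can be completed with the same tools the paper uses, but it is not shorter; the paper's direct primal argument applies the implicit function theorem once to a single scalar equation and avoids setting up the bi-analytic equivalence between the $c$- and $p$-parametrizations altogether.
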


The rest of this section is devoted to the proof of this theorem which
we divide into lemmas. In the study of $\mathcal{U}_3$ we use the
versions of composition and implicit function theorems for analytic
maps with values in $\mathbf{L}_{\infty}$ stated in
Appendix~\ref{sec:analytic-maps}.

\begin{Lemma}
  \label{lem:3}
  Each of the families $\mathcal{U}_i$, $i=1,2,3$, is a convex cone.
\end{Lemma}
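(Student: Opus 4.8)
The plan is to verify the convex cone property separately for the three defining conditions, checking closure under scalar multiplication by $a>0$ and under addition, since the $\sup$-convolution is handled in later lemmas. The backbone for $\mathcal{U}_1$ is that the Inada conditions (strict concavity, strict monotonicity, $C^1$ on $(0,\infty)$, and the boundary behavior of $u_c$ at $0$ and $\infty$) are visibly preserved: $(au)_c = au_c$ and $(u_1+u_2)_c = (u_1)_c + (u_2)_c$, so the limits $\lim_{c\downarrow 0} = \infty$ and $\lim_{c\to\infty} = 0$ survive, and $u_1+u_2$ is $C^1$ and strictly concave. For the growth bound~\eqref{eq:14}, scaling by $a$ only changes the constant $N$ (absorb $\log a$ into the ``$1+\abs{x}+\abs{y}$'' term), and for a sum one takes $N = \max(N(u_1),N(u_2)) + \log 2$, since $\abs{u_1(t,e^y,x) + u_2(t,e^y,x)} \le e^{N(u_1)(1+\abs{x}+\abs{y})} + e^{N(u_2)(1+\abs{x}+\abs{y})} \le 2 e^{\max(N(u_1),N(u_2))(1+\abs{x}+\abs{y})}$.

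For $\mathcal{U}_2$, on top of the $\mathcal{U}_1$ verification one checks that $u_{cc}$ and $u_{cx^i}$ of $au$ and of $u_1+u_2$ exist, are continuous, and that the combined second derivative in $c$ stays negative — immediate since $(au)_{cc} = a u_{cc} < 0$ and $(u_1+u_2)_{cc} = (u_1)_{cc} + (u_2)_{cc} < 0$. The only real content is bound~\eqref{eq:15}. For $au$ the ratio $-c u_{cc}/u_c + \abs{u_{cx^i}}/u_c$ is scale-invariant, so the same $N$ works. For $u_1+u_2$ the key observation is that a sum of two negative fractions of the form $\frac{\alpha_i}{\beta_i}$ with $\alpha_i, \beta_i > 0$ obeys $\frac{\alpha_1 + \alpha_2}{\beta_1 + \beta_2} \le \max\!\bigl(\tfrac{\alpha_1}{\beta_1}, \tfrac{\alpha_2}{\beta_2}\bigr)$ (mediant inequality), and likewise $\frac{\abs{a_1} + \abs{a_2}}{\beta_1 + \beta_2} \le \frac{\abs{a_1}}{\beta_1} + \frac{\abs{a_2}}{\beta_2}$ when reorganized appropriately; applying this with $\alpha_i = -c (u_i)_{cc}$, $a_i = c (u_i)_{cx^i}$, $\beta_i = (u_i)_c$ gives the bound $-\frac{c(u_1+u_2)_{cc}}{(u_1+u_2)_c} + \frac{\abs{(u_1+u_2)_{cx^i}}}{(u_1+u_2)_c} \le N(u_1) + N(u_2)$, so $N = N(u_1) + N(u_2)$ suffices.

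For $\mathcal{U}_3$ the additional requirement is that the map in~\eqref{eq:16} — built from the three ratios $u_{ct}/u_c$, $c u_{cc}/u_c$, and $u_c/(c u_{cc})$ — is a bounded analytic $\mathbf{L}_\infty(\mathbb{R}^{d+1})$-valued map on a neighborhood of $(0,1)\times\{1\}$. For scalar multiplication all three ratios are invariant under $u \mapsto au$ (each is homogeneous of degree $0$ in $u$), so $au \in \mathcal{U}_3$ trivially. For $u_1 + u_2$ one writes, with $c = s e^y$, the ratio $\frac{(u_1+u_2)_{ct}}{(u_1+u_2)_c}$ as a rational expression in the four quantities $(u_i)_{ct}/(u_i)_c$ and $(u_i)_c$; but the cleanest route is to note that $\frac{(u_1+u_2)_{ct}}{(u_1+u_2)_c}$ is a convex combination $\sum_i \theta_i \frac{(u_i)_{ct}}{(u_i)_c}$ with weights $\theta_i = (u_i)_c / ((u_1)_c + (u_2)_c)$, and similarly $c(u_1+u_2)_{cc}/(u_1+u_2)_c = \sum_i \theta_i c(u_i)_{cc}/(u_i)_c$, while $u_c/(cu_{cc})$ is the reciprocal of the latter. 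Since the $\theta_i$ are analytic $\mathbf{L}_\infty$-valued (being ratios of nonvanishing analytic maps — this is where the composition/inversion lemmas from Appendix~\ref{sec:analytic-maps} enter, the denominator $c u_{cc}$ being bounded away from zero locally thanks to the boundedness of $u_c/(cu_{cc})$), and since products, sums, and reciprocals of bounded analytic $\mathbf{L}_\infty$-valued maps are again bounded and analytic, the combined ratios inherit boundedness and analyticity on a common (possibly smaller) neighborhood. The main obstacle is this last point: one must confirm that $\mathbf{L}_\infty$-valued analyticity is genuinely stable under these algebraic operations — in particular that the reciprocal of a map bounded away from zero is analytic, and that $(u_1)_c + (u_2)_c$ stays bounded away from zero in $\mathbf{L}_\infty$ on a neighborhood — which is precisely what the analytic-map machinery in the appendix is designed to supply, so the argument is a matter of invoking it carefully rather than a conceptual difficulty.
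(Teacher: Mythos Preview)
Your treatment of $\mathcal{U}_1$ and $\mathcal{U}_2$ is correct and matches the paper, which dismisses these as ``straightforward.'' For $\mathcal{U}_3$ you have the right structural observation --- the ratios $u_{ct}/u_c$ and $cu_{cc}/u_c$ for the sum are convex combinations with weights $\theta_i = (u_i)_c/((u_1)_c+(u_2)_c)$, which is exactly what the paper uses --- but there is a genuine gap in your justification that the $\theta_i$ are analytic $\mathbf{L}_\infty$-valued maps.

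You assert that $\theta_i$ is analytic ``being ratios of nonvanishing analytic maps,'' but neither $(u_i)_c(t,se^y,x)$ nor the sum $(u_1)_c+(u_2)_c$ is an $\mathbf{L}_\infty(\mathbb{R}^{d+1})$-valued map at all: by the Inada conditions $u_c(t,c,x)\to\infty$ as $c\downarrow 0$, and since $c=se^y$ ranges over all of $(0,\infty)$ as $y$ varies, these functions are unbounded in $(x,y)$ for every fixed $(t,s)$. So the appendix machinery for quotients of $\mathbf{L}_\infty$-analytic maps cannot be applied to numerator and denominator separately. The definition of $\mathcal{U}_3$ gives analyticity only for the \emph{ratios} $a_i=-c(u_i)_{cc}/(u_i)_c$ and $q_i=(u_i)_{ct}/(u_i)_c$, not for $(u_i)_c$ itself.

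The paper fills this gap by fixing $t_0\in(0,1)$ and passing to the normalized quantities
\[
  f_i(t,sc,x)\;=\;\frac{u^i_c(t,sc,x)}{u^i_c(t_0,c,x)}
  \;=\;\exp\!\Bigl(-\int_1^s \tfrac1r\,a_i(t,rc,x)\,dr+\int_{t_0}^t q_i(r,c,x)\,dr\Bigr),
\]
which \emph{are} bounded analytic $\mathbf{L}_\infty$-valued maps near $(t_0,1)$ because they are built by integrating and exponentiating the known bounded analytic maps $a_i,q_i$ (via Theorem~\ref{th:3}). Then $\theta_1 = w$ is rewritten as
\[
  w(t,sc,x)=\frac{w(t_0,c,x)\,f_1}{w(t_0,c,x)\,f_1+(1-w(t_0,c,x))\,f_2},
\]
a composition of a fixed bounded function $w(t_0,\cdot,\cdot)$ with the analytic $f_i$, and Theorem~\ref{th:3} applies. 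This normalization step is the missing ingredient in your argument; once it is in place, your convex-combination reasoning and your treatment of the reciprocal $u_c/(cu_{cc})$ go through exactly as you describe.
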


\begin{proof}
  For $\mathcal{U}_1$ and $\mathcal{U}_2$ the result is
  straightforward.

  Let $u^1$ and $u^2$ be in $\mathcal{U}_3$, denote $u\set u^1+u^2$,
  and fix $t_0\in (0,1)$. The analyticity of the map~\eqref{eq:16} at
  the point $(t_0,1)$, with $g=g(t,c,x)$ standing for ${u_{ct}}/{u_c}$
  and ${cu_{cc}}/{u_c}$, readily follows if we can show that
  \begin{equation}
    \label{eq:17}
    (t,s)\mapsto \left(w(t,s e^y,x) \set \frac{u^1_{c}}{u^1_{c} +
        u^2_{c}}(t,s e^y,x)\right)_{(x,y)\in \mathbb{R}^d\times \mathbb{R}}
  \end{equation}
  is an analytic map of some neighborhood of $(t_0,1)$ to
  $\mathbf{L}_{\infty}$.

  For $i=1,2$ denote $a_i \set -{cu^i_{cc}}/{u^i_{c}}$ and $q_i \set
  {u^i_{ct}}/{u^i_{c}}$. Observe that
  \begin{align*}
    f_i(t,sc,x) &\set \frac{u^i_{c}(t,sc,x)}{u^i_{c}(t_0,c,x)} =
    \exp(-\int_{1}^s \frac1r a_i(t,rc,x) dr + \int_{t_0}^t q_i(r, c,x)
    dr), \\
    w(t,sc,x) &= \frac{w(t_0,c, x) f_1(t,sc,x)}{w(t_0,c,x) f_1(t,sc,x)
      + (1-w(t_0,c,x)) f_2(t,sc,x)}.
  \end{align*}
  The analyticity of $a_i$ and $q_i$ in $\mathcal{U}_3$ and
  Theorem~\ref{th:3} yield the analyticity of the map $(t,s)\mapsto
  \left(f_i(t,s e^{y},x)\right)_{(x,y)\in \mathbb{R}^{d+1}}$ of some
  neighborhood of $(t_0,1)$ to $\mathbf{L}_{\infty}$. As
  $f_i(t_0,c,x)=1$ and
  \begin{displaymath}
    (p,q) \mapsto \left( \frac{w(t_0,e^y,x)p}{w(t_0,e^y,x)p +
        (1-w(t_0,e^y,x))q}\right)_{(x,y)\in \mathbb{R}^d\times 
      \mathbb{R}} 
  \end{displaymath}
  is an analytic map of $(0,\infty)^2$ to $\mathbf{L}_{\infty}$,
  Theorem~\ref{th:3} yields the required analyticity of the
  map~\eqref{eq:17}; hence, also the analyticity of the
  map~\eqref{eq:16} at $(t_0,1)$ with $g=g(t,c,x)$ standing for
  ${u_{ct}}/{u_c}$ and ${cu_{cc}}/{u_c}$.

  As $a\set-{cu_{cc}}/{u_c}$ is a convex combination of the
  corresponding risk-aversions for $u^1$ and $u^2$, there is $N>0$
  such that $\frac1N\leq a\leq N$.  Another application of
  Theorem~\ref{th:3} implies the analyticity of the map~\eqref{eq:16}
  at $(t_0,1)$ with $g\set 1/a$.
\end{proof}

\begin{Lemma}
  \label{lem:4}
  Let $u^1,u^2\in \mathcal{U}_1$. Then $u\set u^1\oplus_c u^2$ belongs
  to $\mathcal{U}_1$ and the upper bound in~\eqref{eq:13} is attained
  on $\widehat c_1 = f$ and $\widehat c_2 = c - f$, where the function
  $f=f(t,c,x)$ is such that $0<f<c$ and
  \begin{equation}
    \label{eq:18}
    u_c(t,c,x) = u^1_c(t,f(t,c,x),x) = u^2_c(t,c-f(t,c,x),x).  
  \end{equation}
\end{Lemma}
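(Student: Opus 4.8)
The plan is to fix $(t,x)$ and reduce everything to a one-dimensional fact about the sup-convolution of the two strictly concave functions $v_i(\cdot)\set u^i(t,\cdot,x)$, $i=1,2$, each of which is $C^1$ on $(0,\infty)$ and satisfies the Inada conditions. First I would construct the maximizer explicitly. Since $v_i$ is strictly concave and $C^1$, its derivative $v_i'$ is continuous and strictly decreasing, and by the Inada conditions its range is all of $(0,\infty)$; hence $I_i\set(v_i')^{-1}\colon(0,\infty)\to(0,\infty)$ is well-defined, continuous and strictly decreasing. Then $p\mapsto I_1(p)+I_2(p)$ is a continuous strictly decreasing bijection of $(0,\infty)$ onto $(0,\infty)$; denote its inverse by $c\mapsto p(c)$ and set $f(t,c,x)\set I_1(p(c))$, so that $0<f(t,c,x)<c$ and $v_1'(f)=v_2'(c-f)=p(c)$. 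Applying the supporting-line inequality $v_i(c_i)\le v_i(\widehat c_i)+v_i'(\widehat c_i)(c_i-\widehat c_i)$ with $\widehat c_1=f$, $\widehat c_2=c-f$, summing over $i$, and using $c_1+c_2=c=\widehat c_1+\widehat c_2$, shows that the supremum in~\eqref{eq:13} is attained at $(f,c-f)$ and is finite. This yields $u(t,c,x)=v_1(f)+v_2(c-f)$ together with the first-order conditions~\eqref{eq:18} (the identity $u_c=p$ being established in the next step).

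Next I would verify that $u\set u^1\oplus_c u^2$ lies in $\mathcal{U}_1$. Differentiating $I_1(p(c))+I_2(p(c))=c$ gives $(I_1'+I_2')p'=1$, and a direct differentiation of $c\mapsto v_1(I_1(p(c)))+v_2(I_2(p(c)))$ then gives $u_c(t,c,x)=p(c)$; hence $u(t,\cdot,x)$ is $C^1$, strictly increasing (as $p>0$) and strictly concave (as $p$ is strictly decreasing), and $u(t,0^+,x)$ exists in $[-\infty,\infty)$ by monotonicity. The Inada endpoint behaviour follows from $f(t,c,x),\,c-f(t,c,x)\in(0,c)$: as $c\downarrow 0$ both arguments tend to $0$, so $p(c)=v_1'(f)\to\infty$; as $c\to\infty$ at least one argument tends to $\infty$, so $p(c)\to 0$; monotonicity of $p$ promotes these to true limits. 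Measurability of $u$ on $[0,1]\times(0,\infty)\times\mathbb{R}^d$ follows by writing, using the continuity of $c_1\mapsto v_1(c_1)+v_2(c-c_1)$ on $(0,c)$, $u(t,c,x)=\sup_{q\in\mathbb{Q}\cap(0,1)}\bigl(u^1(t,qc,x)+u^2(t,(1-q)c,x)\bigr)$, a countable supremum of jointly measurable functions. Finally, the growth bound~\eqref{eq:14} for $u$ comes from the sandwich $v_1(c/2)+v_2(c/2)\le u(t,c,x)\le v_1(c)+v_2(c)$ (lower bound by the feasible split $c_1=c_2=c/2$, upper bound since each $v_i$ is increasing and $\widehat c_i<c$), applying~\eqref{eq:14} for $u^1,u^2$ with $c=e^y$ and $c=e^{y-\ln 2}$, and absorbing the resulting fixed factors into a single constant $N$ uniform in $(t,x,y)$.

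The concavity bookkeeping and the growth estimate are routine; the one place where the hypotheses are genuinely used is the construction of the \emph{interior} maximizer, and here the Inada conditions are indispensable — without $v_i'(0^+)=\infty$ the optimal split could degenerate to a boundary point, so $f$ would fail to satisfy $0<f<c$, and without $v_i'(\infty)=0$ the equation $v_1'(f)=v_2'(c-f)$ need not be solvable for every $c>0$. Accordingly, I expect the main point to spell out carefully to be the well-posedness of $f$ via the inversion of $p\mapsto I_1(p)+I_2(p)$ and the envelope identity $u_c=p$, from which~\eqref{eq:18} is immediate.
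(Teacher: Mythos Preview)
Your argument is essentially what the paper sketches: the paper's own proof says only that ``elementary arguments show that $u(t,\cdot,x)$ satisfies the Inada conditions and that the upper bound is attained,'' and then uses exactly your sandwich
\[
(u^1+u^2)(t,c/2,x)\le u(t,c,x)\le (u^1+u^2)(t,c,x)
\]
to obtain~\eqref{eq:14}. You have simply supplied the details behind those elementary arguments, and the measurability remark via a countable supremum is a nice addition.

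One technical point deserves care. In $\mathcal{U}_1$ the functions $u^i(t,\cdot,x)$ are only assumed continuously differentiable, not $C^2$, so the inverse marginals $I_i=(v_i')^{-1}$ are continuous and strictly decreasing but need not be differentiable; your chain-rule step ``$(I_1'+I_2')p'=1$'' is therefore not justified as written. The conclusion $u_c=p$ is still correct, but a safer route is the one you already set up: from the supporting-line inequality you get $u(c')\le u(c)+p(c)(c'-c)$ for all $c,c'>0$, and by symmetry $u(c')\ge u(c)+p(c')(c'-c)$; since $p$ is continuous, squeezing gives $u_c(c)=p(c)$. With this repair the proof is complete and matches the paper's.
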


\begin{proof}
  Elementary arguments show that $u(t,\cdot,x)$ satisfies the Inada
  conditions and that the upper bound in~\eqref{eq:13} is attained on
  the above $\widehat c_1$ and $\widehat c_2$. Since
  \begin{displaymath}
    (u^1 + u^2)(t,c/2,x) \leq u(t,c,x) \leq (u^1 + u^2)(t,c,x)
  \end{displaymath}
  we deduce that $u$ satisfies~\eqref{eq:14}. Hence, $u$ belongs to
  $\mathcal{U}_1$.
\end{proof}

\begin{Lemma}
  \label{lem:5}
  Let $u^1,u^2\in \mathcal{U}_2$. Then $u\set u^1\oplus_c u^2$ belongs
  to $\mathcal{U}_2$ and
  \begin{align}
    \label{eq:19}
    \frac{u_c}{u_{cc}}(t,c,x) &= \frac{u^1_c}{u^1_{cc}}(t,f,x) +
    \frac{u^2_c}{u^2_{cc}}(t,c-f,x), \\
    \label{eq:20}
    \frac{u_{cx^i}}{u_{cc}}(t,c,x) &=
    \frac{u^1_{cx^i}}{u^1_{cc}}(t,f,x) +
    \frac{u^2_{cx^i}}{u^2_{cc}}(t,c-f,x),
  \end{align}
  where $f=f(t,c,x)$ is given by~\eqref{eq:18}.
\end{Lemma}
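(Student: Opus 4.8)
The plan is to differentiate the first-order condition~\eqref{eq:18} implicitly, exactly as one does for the classical sup-convolution of smooth concave functions, and then to check that the structural bound~\eqref{eq:15} is inherited. First I would note that by Lemma~\ref{lem:4} we already know $u = u^1 \oplus_c u^2 \in \mathcal{U}_1$ and that the optimal split is $\widehat c_1 = f$, $\widehat c_2 = c - f$ with $f = f(t,c,x)$ characterized by $u^1_c(t,f,x) = u^2_c(t,c-f,x) = u_c(t,c,x)$; since $u^i_{cc} < 0$ strictly, the implicit function theorem applied to $u^1_c(t,f,x) - u^2_c(t,c-f,x) = 0$ shows $f$ is $C^1$ in $(c,x)$ with
\begin{displaymath}
  f_c = \frac{u^2_{cc}}{u^1_{cc} + u^2_{cc}}(t,f,x;t,c-f,x), \qquad
  f_{x^i} = \frac{u^2_{cx^i} - u^1_{cx^i}}{u^1_{cc} + u^2_{cc}},
\end{displaymath}
with the obvious evaluation points. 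Differentiating $u_c(t,c,x) = u^1_c(t,f,x)$ in $c$ gives $u_{cc} = u^1_{cc} f_c = u^1_{cc} u^2_{cc}/(u^1_{cc}+u^2_{cc})$, i.e. $1/u_{cc} = 1/u^1_{cc} + 1/u^2_{cc}$ at the respective arguments; multiplying by $u_c = u^1_c = u^2_c$ yields~\eqref{eq:19}. In particular $u_{cc} < 0$, and $u_{cc}$ is continuous in $(c,x)$ because $f$, $u^i_{cc}$ are. Differentiating $u_c(t,c,x) = u^1_c(t,f,x)$ in $x^i$ gives $u_{cx^i} = u^1_{cx^i} + u^1_{cc} f_{x^i}$; substituting $f_{x^i}$ and dividing by $u_{cc} = u^1_{cc} f_c$ produces, after simplification using $f_c = u^2_{cc}/(u^1_{cc}+u^2_{cc})$, exactly~\eqref{eq:20}. (One can also verify~\eqref{eq:20} more symmetrically by differentiating the symmetric relation $u^1_c(t,f,x) = u^2_c(t,c-f,x)$ in $x^i$ to get $f_{x^i}$, then plugging into either $u_{cx^i} = u^1_{cx^i}+u^1_{cc}f_{x^i}$ or $u_{cx^i} = u^2_{cx^i} - u^2_{cc}f_{x^i}$ and dividing by $u_{cc}$; the two give consistent answers, which is the identity~\eqref{eq:20} itself.)

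It remains to verify the bound~\eqref{eq:15} for $u$, namely $(-cu_{cc}/u_c + |u_{cx^i}|/u_c)(t,c,x) \le N$ for a suitable $N$. Write $r^i = -u^i_c/(c_i u^i_{cc}) > 0$ for the reciprocal risk-tolerance-type quantity (with $c_1 = f$, $c_2 = c-f$), so that~\eqref{eq:15} for $u^i$ reads $1/r^i + |u^i_{cx^i}|/u^i_c \le N_i$, hence in particular $1/r^i \le N_i$ and $|u^i_{cx^i}/u^i_{cc}| = c_i r^i |u^i_{cx^i}|/u^i_c \le c_i r^i N_i$. From~\eqref{eq:19}, using $u_c = u^i_c$, we get $-c u_c/(c\,u_{cc}) = -u_c/u_{cc} = -u^1_c/u^1_{cc} - u^2_c/u^2_{cc} = c_1 r^1 + c_2 r^2$, so $-c u_{cc}/u_c = c/(c_1 r^1 + c_2 r^2)$, and since $c = c_1 + c_2$ and each $r^i \ge 1/N_i$, this is bounded by $\max(N_1,N_2)$. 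For the cross term, dividing~\eqref{eq:20} by $-u_c/u_{cc} = c_1 r^1 + c_2 r^2$ and using $u_{cx^i}/u_c = (u_{cx^i}/u_{cc})/(u_c/u_{cc})$ gives
\begin{displaymath}
  \frac{|u_{cx^i}|}{u_c} \le \frac{c_1 r^1\, |u^1_{cx^i}|/u^1_c \cdot r^1\, ?}{\cdots},
\end{displaymath}
so I would instead argue directly: $u_{cx^i}/u_c = (1/u_c)(u_{cx^i}/u_{cc})\,u_{cc}$, and more cleanly, from $u_{cx^i}/u_{cc} = u^1_{cx^i}/u^1_{cc} + u^2_{cx^i}/u^2_{cc}$ and $u_c/u_{cc} \cdot (u_{cc}/u^i_{cc}) $ observe that $|u_{cx^i}|/u_c = |u_{cx^i}/u_{cc}| \cdot |u_{cc}/u_c| \le (|u^1_{cx^i}/u^1_{cc}| + |u^2_{cx^i}/u^2_{cc}|)\,|u_{cc}/u_c|$; writing $|u^i_{cx^i}/u^i_{cc}| = (|u^i_{cx^i}|/u^i_c)\cdot(u^i_c/|u^i_{cc}|) = (|u^i_{cx^i}|/u^i_c)\cdot c_i r^i \le N_i c_i r^i$ and $|u_{cc}/u_c| = 1/(c_1 r^1 + c_2 r^2)$, we obtain $|u_{cx^i}|/u_c \le (N_1 c_1 r^1 + N_2 c_2 r^2)/(c_1 r^1 + c_2 r^2) \le \max(N_1,N_2)$. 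Combining, $N \set 2\max(N_1,N_2)$ works, so $u \in \mathcal{U}_2$.

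The only real subtlety — and the step I expect to require the most care in writing out — is the smoothness/continuity of $f$ and hence of $u_{cc}, u_{cx^i}$ in $(c,x)$: one must apply the implicit function theorem to the map $(c,x,f)\mapsto u^1_c(t,f,x) - u^2_c(t,c-f,x)$, whose $f$-derivative is $u^1_{cc} + u^2_{cc} < 0$ and nonvanishing, and note that $u^1_{cc}, u^2_{cc}, u^1_{cx^i}, u^2_{cx^i}$ are continuous by the definition of $\mathcal{U}_2$, so the resulting $f_c, f_{x^i}$ are continuous and therefore $u_{cc} = u^1_{cc}f_c$ and $u_{cx^i} = u^1_{cx^i} + u^1_{cc}f_{x^i}$ are continuous; boundedness away from $0$ and $c$ of $f$ (needed so the formulas make sense) follows from the Inada conditions as in Lemma~\ref{lem:4}. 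Everything else is the bookkeeping above, which is routine once the two implicit-differentiation identities~\eqref{eq:19}–\eqref{eq:20} are in hand.
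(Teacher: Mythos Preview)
Your proposal is correct and follows essentially the same route as the paper: invoke Lemma~\ref{lem:4} for $u\in\mathcal{U}_1$, apply the implicit function theorem to~\eqref{eq:18} to obtain $f_c,f_{x^i}$ and hence $u_{cc},u_{cx^i}$ continuous in $(c,x)$, derive~\eqref{eq:19}--\eqref{eq:20} by direct differentiation, and then read off~\eqref{eq:15} from the convex-combination structure (the paper phrases this last step by noting that $u_c/(cu_{cc})$ and $u_{cx^i}/u_c$ are convex combinations of the corresponding quantities for $u^1,u^2$, which is exactly your weighted-average bound with weights $c_ir^i/(c_1r^1+c_2r^2)$). The only cosmetic issue is the aborted display with the ``$?$'' placeholder; your subsequent direct argument is the clean one and matches the paper.
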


\begin{proof}
  By Lemma~\ref{lem:4}, $u\in \mathcal{U}_1$. From~\eqref{eq:18} and
  the implicit function theorem we deduce that the derivatives $f_c$
  and $f_{x^i}$ and, then, $u_{cc}$ and $u_{cx^i}$ exist and are
  continuous functions with respect to $(c,x)$.  Direct computations
  show that $u_{cc}<0$ and that the identities~\eqref{eq:19} and
  \eqref{eq:20} hold.  It follows that ${u_c}/({cu_{cc}})$ and
  ${u_{cx^i}}/{u_c}$ are convex combinations of the corresponding
  coefficients of $u^1$ and $u^2$.  This implies~\eqref{eq:15}. Hence,
  $u\in \mathcal{U}_2$.
\end{proof}

The verification of the closure of $\mathcal{U}_3$ under $\oplus_c$
relies on

\begin{Lemma}
  \label{lem:6}
  Let $u^1,u^2\in \mathcal{U}_3$, $u\set u^1\oplus_c u^2$, and the
  function $f=f(t,c,x)$ on $[0,1]\times (0,\infty)\times \mathbb{R}^d$
  be given by~\eqref{eq:18}.  Let $t_0 \in (0,1)$. Then
  \begin{displaymath}
    (t,s)\mapsto  
    \left(\frac{f(t,se^y,x)}{f(t_0,e^y,x)}, \;
      \frac{se^y-f(t,se^y,x)}{e^y-f(t_0,e^y,x)}\right)_{(x,y)\in  
      \mathbb{R}^d\times\mathbb{R}}
  \end{displaymath}
  are analytic maps of a neighborhood of the point $(t_0,1)$ to
  $\mathbf{L}_{\infty}(\mathbb{R}^{d+1})$.
\end{Lemma}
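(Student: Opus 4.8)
The plan is to realise the two ratios in the statement as the coordinates of an implicitly defined pair of functions in $\mathbf{L}_\infty(\mathbb{R}^{d+1})$, with $(t,s)$ playing the role of parameters, and then to invoke the analytic implicit function theorem of Appendix~\ref{sec:analytic-maps}. Fix $t_0\in(0,1)$, write $c=se^y$, set $f_0=f_0(x,y):=f(t_0,e^y,x)$ and $\beta=\beta(x,y):=f_0/e^y\in(0,1)$, and take as unknowns $\phi_1=\phi_1(t,s):=\bigl(f(t,se^y,x)/f_0\bigr)_{(x,y)}$ and $\phi_2=\phi_2(t,s):=\bigl((se^y-f(t,se^y,x))/(e^y-f_0)\bigr)_{(x,y)}$, so $\phi_1(t_0,1)=\phi_2(t_0,1)=\mathbf 1$, the constant function $1$. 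Summing the defining equalities $f=\phi_1f_0$, $se^y-f=\phi_2(e^y-f_0)$ gives the affine relation $\beta\phi_1+(1-\beta)\phi_2=s$. Dividing the identity $u^1_c(t,f,x)=u^2_c(t,se^y-f,x)$ of~\eqref{eq:18} by $u_c(t_0,e^y,x)=u^1_c(t_0,f_0,x)=u^2_c(t_0,e^y-f_0,x)$ and using, exactly as in the proof of Lemma~\ref{lem:3}, the exponential representation of $u^i_c$ through $a_i:=-cu^i_{cc}/u^i_c$ and $q_i:=u^i_{ct}/u^i_c$, one obtains the nonlinear relation
\[
  \int_1^{\phi_1}\frac{a_1(t,\rho f_0,x)}{\rho}\,d\rho-\int_1^{\phi_2}\frac{a_2(t,\rho(e^y-f_0),x)}{\rho}\,d\rho=\int_{t_0}^{t}\bigl(q_1(\tau,f_0,x)-q_2(\tau,e^y-f_0,x)\bigr)\,d\tau.
\]
Let $\Psi(t,s,\phi_1,\phi_2)\in\mathbf{L}_\infty(\mathbb{R}^{d+1})^2$ be the pair whose first coordinate is $\beta\phi_1+(1-\beta)\phi_2-s$ and whose second coordinate is the difference of the two sides of this display. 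Then $\Psi(t_0,1,\mathbf 1,\mathbf 1)=0$ and the pair built from $f$ solves $\Psi=0$.

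The substantive step is the analyticity of $\Psi$ on a neighbourhood of $(t_0,1,\mathbf 1,\mathbf 1)$ in $\mathbb{R}^2\times\mathbf{L}_\infty(\mathbb{R}^{d+1})^2$. The first coordinate is affine in $(\phi_1,\phi_2)$ with the bounded $\mathbf{L}_\infty$-coefficient $\beta$, hence analytic. For the second coordinate the key claim is that the maps $(t,\rho)\mapsto\bigl(a_i(t,\rho f_0(x,y),x)\bigr)_{(x,y)}$ and $t\mapsto\bigl(q_i(t,f_0(x,y),x)\bigr)_{(x,y)}$ are analytic into $\mathbf{L}_\infty(\mathbb{R}^{d+1})$ near $(t_0,1)$, resp.\ near $t_0$. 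Here I would write $f_0(x,y)=e^{\eta(x,y)}$ with $\eta(x,y):=\ln f(t_0,e^y,x)$ and observe that these maps are the compositions, with the substitution operator $T_\eta g:=\bigl(g(x,\eta(x,y))\bigr)_{(x,y)}$, of the maps $(t,\rho)\mapsto\bigl(a_i(t,\rho e^{y_1},x)\bigr)_{(x,y_1)}$ and $t\mapsto\bigl(q_i(t,e^{y_1},x)\bigr)_{(x,y_1)}$, which are analytic into $\mathbf{L}_\infty(\mathbb{R}^{d+1})$ precisely because $u^i\in\mathcal{U}_3$. It remains to see that $T_\eta$ is a bounded operator on $\mathbf{L}_\infty(\mathbb{R}^{d+1})$: differentiating~\eqref{eq:18} in $c$ and combining with the analogue of~\eqref{eq:19} (whose derivation uses only the existence of the second derivatives and therefore applies verbatim to members of $\mathcal{U}_3$) gives $f_c=(a/a_1)(f/c)$ with $a:=-cu_{cc}/u_c$, whence $\partial_y\eta=f_c\,e^y/f=a(t_0,e^y,x)/a_1(t_0,f_0,x)$, which is bounded and bounded away from zero since $a$ and $a_i$ are; consequently $(x,y)\mapsto(x,\eta(x,y))$ pulls Lebesgue-null sets back to Lebesgue-null sets and $\norm{T_\eta g}_{\mathbf{L}_\infty}\le\norm{g}_{\mathbf{L}_\infty}$. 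A bounded linear operator preserves analyticity, so the claim follows; integrating in the parameter $\rho$ and then absorbing the $\phi_i$-dependent upper limits of integration through the composition theorem (Theorem~\ref{th:3}), together with the boundedness of the $q_i$, one gets the analyticity of the second coordinate of $\Psi$.

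It then remains to check that $D_{(\phi_1,\phi_2)}\Psi(t_0,1,\mathbf 1,\mathbf 1)$ is a boundedly invertible operator on $\mathbf{L}_\infty(\mathbb{R}^{d+1})^2$. A direct differentiation identifies it with multiplication by the $\mathbf{L}_\infty$-valued matrix $\left(\begin{smallmatrix}\beta & 1-\beta\\ a_1(t_0,f_0,x) & -a_2(t_0,e^y-f_0,x)\end{smallmatrix}\right)$, whose determinant has absolute value $\beta a_2+(1-\beta)a_1$, bounded and bounded away from zero because $a_1$ and $a_2$ are; hence multiplication by the (entrywise bounded) inverse matrix is the sought inverse. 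The analytic implicit function theorem of Appendix~\ref{sec:analytic-maps} then yields a unique analytic map $(t,s)\mapsto(\phi_1(t,s),\phi_2(t,s))$ near $(t_0,1)$ solving $\Psi=0$; since the pair coming from $f$ solves $\Psi=0$ and tends to $(\mathbf 1,\mathbf 1)$ in $\mathbf{L}_\infty$ as $(t,s)\to(t_0,1)$ (by the two-sided estimate $N^{-2}\le c\,\partial_c(\ln f)\le N^2$ in the $c$-direction and a uniformly bounded integrand in the $t$-direction), it coincides with that map, which is the assertion. The main obstacle is the middle paragraph: legitimising the substitution $f_0(e^y,x)=e^{\eta(x,y)}$ inside the $\mathcal{U}_3$-maps forces two-sided control of $\partial_y\eta=a/a_1$, and this is exactly the place where the two-sided bounds defining $\mathcal{U}_3$ (both $cu_{cc}/u_c$ and $u_c/(cu_{cc})$ being bounded) are consumed; once that and the invertibility above are in hand, the composition calculus and implicit function theorem of the appendix finish the argument.
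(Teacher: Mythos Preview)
Your argument is correct in substance and follows the same implicit–function strategy as the paper, but the execution differs in one structural point and one tooling point that are worth flagging.

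Structurally, the paper works with a \emph{single} scalar unknown. It sets $h_1:=f/(c-f)$, normalises $h_2:=h_1/h_1(t_0,1,\cdot,\cdot)$, and expresses both ratios $\phi_1,\phi_2$ algebraically through $h_2$ and an auxiliary $h_3(r,\cdot):=(1+h_1(t_0,1,\cdot))/(1+rh_1(t_0,1,\cdot))$; the implicit equation is a single scalar relation $h_4(t,s,r,\cdot)=0$ in $r$, to which Theorem~\ref{th:4} applies verbatim. You keep the pair $(\phi_1,\phi_2)$ and two equations, and then invoke ``the analytic implicit function theorem of Appendix~\ref{sec:analytic-maps}'' on $\mathbb{R}^2\times\mathbf{L}_\infty(\mathbb{R}^{d+1})^2$. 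That theorem, as stated, takes $y\in\mathbb{R}$, not $y\in\mathbf{L}_\infty$ or $y\in\mathbb{R}^2$; you are implicitly using the Banach-space analytic IFT (true, but outside the appendix). The cheapest fix that stays inside the paper's toolkit is to use your own affine relation $\beta\phi_1+(1-\beta)\phi_2=s$ to eliminate $\phi_2$, leaving one scalar equation in $\phi_1$; its $\phi_1$-derivative at the base point is $a_1+\tfrac{\beta}{1-\beta}a_2$, bounded below, so Theorem~\ref{th:4} applies directly. After elimination your proof and the paper's are essentially the same computation in different coordinates.

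On the tooling side, your device of writing $f_0=e^{\eta}$ and passing through the bounded substitution operator $T_\eta$ (with $\partial_y\eta=a/a_1$ bounded two-sidedly, so null sets pull back to null sets) is a clean way to transfer the $\mathcal{U}_3$ analyticity from the generic argument $e^{y_1}$ to the specific one $f_0$; the paper effects the same transfer implicitly via the $h_3$ formulas and repeated use of Theorem~\ref{th:3}, without naming $T_\eta$. One small slip in your write-up: for $a_2$ and $q_2$ the argument is $\rho(e^y-f_0)$, not $\rho f_0$, so you need the companion substitution $\eta_2=\ln(e^y-f_0)$, whose $y$-derivative $a/a_2$ is controlled by the same mechanism; with that correction your analyticity paragraph goes through.
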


\begin{proof}
  For $t\in [0,1]$, $s>0$, $r>0$, $y\in \mathbb{R}$, and $x\in
  \mathbb{R}^d$ denote
  \begin{align*}
    h_1(t,s,y,x) &\set \frac{f(t,se^y,x)}{se^y-f(t,se^y,x)},
    \\
    h_2(t,s,y,x) &\set \frac{h_1(t,s,y,x)}{h_1(t_0,1,y,x)}, \\
    h_3(r,y,x) &\set \frac{1+h_1(t_0,1,y,x)}{1+rh_1(t_0,1,y,x)}
  \end{align*}
  and observe that $h_i>0$ and
  \begin{equation}
    \label{eq:21}
    \begin{split}      
      f(t,se^y,x) &= se^y \frac{h_1}{1+h_1} = sf(t_0,e^y,x)h_2
      h_3(h_2,y,x), \\
      se^y - f(t,se^y,x) &= se^y \frac{1}{1+h_1} = s(e^y -
      f(t_0,e^y,x)) h_3(h_2,y,x),
    \end{split}
  \end{equation}
  where we omitted the argument $(t,s,y,x)$ for $h_1$ and $h_2$. As
  $h_2(t_0,1, \cdot,\cdot) = 1$ and $r\mapsto h_3(r,\cdot,\cdot)$ is
  an analytic map of $(0,\infty)$ to
  $\mathbf{L}_{\infty}(\mathbb{R}^{d+1})$, the result follows
  from~\eqref{eq:21} and Theorem~\ref{th:3} in the appendix if
  \begin{equation}
    \label{eq:22}
    (t,s)\mapsto  \left(
      h_2(t,s,y,x)\right)_{(x,y)\in 
      \mathbb{R}^d\times\mathbb{R}}
  \end{equation}
  is an analytic map of a neighborhood of the point $(t_0,1)$ to
  $\mathbf{L}_{\infty}(\mathbb{R}^{d+1})$.

  We rely on the version of implicit function theorem stated in
  Theorem~\ref{th:4}. Define the function
  \begin{equation}
    \label{eq:23}
    h_4(t,s,r,y,x) \set \ln
    \left(\frac{u^1_c(t,sf(t_0,e^y,x)rh_3,x)}{u^2_c(t,s(e^y-f(t_0,e^y,x))  
        h_3,x)}\right),
  \end{equation}
  where $h_3=h_3(r,y,x)$.  From~\eqref{eq:21} and~\eqref{eq:18} we
  deduce that
  \begin{displaymath}
    h_4(t,s,h_2,y,x) =  \ln
    \left(\frac{u^1_c(t,f(t,se^y,x),x)}{u^2_c(t,se^y-f(t,se^y,x),x)}\right)
    = 0.  
  \end{displaymath}
  Denote $a_i \set -cu^i_{cc}/u^c$ and $q_i \set u^i_{ct}/u^i_c$.
  Direct computations show that
  \begin{align*}
    \frac{\partial h_4}{\partial t} &= q_1 - q_2, \\
    \frac{\partial h_4}{\partial s} &= \frac1s(-a_1 + a_2), \\
    \frac{\partial h_4}{\partial r} &= -a_1 \frac{\partial}{\partial
      r}\ln(rh_3) + a_2 \frac{\partial}{\partial r}\ln h_3 \\
    &= -\frac1r \left(a_1 \frac1{1+rh_1} + a_2 \frac{rh_1}{1+rh_1}
    \right),
  \end{align*}
  where $h_1$ is evaluated at $(t_0,1,y,x)$ and the omitted arguments
  for $a_i$ and $q_i$ are as for $u^i_c$ in~\eqref{eq:23}. Another
  application of Theorem~\ref{th:3} yields that each of these partial
  derivatives for $h_4$ defines an analytic map of some neighborhood
  of $(t_0,1,1)$ to $\mathbf{L}_{\infty}$.  As $h_4(t_0,1,1,y,x) = 0$
  we obtain that $(t,s,r)\mapsto h_4(t,s,r,\cdot,\cdot)$ is also an
  analytic map of a neighborhood of $(t_0,1,1)$ to
  $\mathbf{L}_{\infty}$. Moreover, \ref{item:8} yields the existence
  of a constant $N>0$ such that $1/N \leq a_i \leq N$. Hence,
  \begin{displaymath}
    \frac1N \leq -\frac{\partial h_4}{\partial r}(t_0,1,1,\cdot,\cdot) \leq N. 
  \end{displaymath}
  The required analyticity of the map~\eqref{eq:22} follows now from
  Theorem~\ref{th:4}.
\end{proof}

The following lemma completes the proof of the theorem.

\begin{Lemma}
  \label{lem:7}
  Let $u^1,u^2\in \mathcal{U}_3$. Then $u\set u^1\oplus_c u^2$ belongs
  to $\mathcal{U}_3$.
\end{Lemma}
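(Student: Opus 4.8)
The plan is to follow the pattern of Lemma~\ref{lem:5}, but to upgrade ``bounded convex combination'' to ``bounded analytic map'' by feeding the representation of Lemma~\ref{lem:6} into the composition theorem, Theorem~\ref{th:3}.

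By Lemma~\ref{lem:4}, $u \in \mathcal{U}_1$ and the optimal split $f=f(t,c,x)$, $0<f<c$, satisfies~\eqref{eq:18}. Differentiating $u^1_c(t,f,x)=u^2_c(t,c-f,x)$ in $c$ and in $t$ (the implicit function theorem applies since $u^i_{cc}<0$) gives $f_c=u^2_{cc}/(u^1_{cc}+u^2_{cc})$ and $f_t=(u^2_{ct}-u^1_{ct})/(u^1_{cc}+u^2_{cc})$, the $u^1$-derivatives evaluated at $(t,f,x)$ and the $u^2$-derivatives at $(t,c-f,x)$; so $f_c,f_t$, and hence $u_{cc}=u^1_{cc}f_c=u^1_{cc}u^2_{cc}/(u^1_{cc}+u^2_{cc})<0$ and $u_{ct}=u^1_{ct}+u^1_{cc}f_t$, exist and are continuous in $(c,x)$. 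A short computation, using $u_c=u^1_c(t,f,x)=u^2_c(t,c-f,x)$, yields the harmonic identities $\tfrac{u_c}{u_{cc}}=\tfrac{u^1_c}{u^1_{cc}}(t,f,x)+\tfrac{u^2_c}{u^2_{cc}}(t,c-f,x)$ and $\tfrac{u_{ct}}{u_{cc}}=\tfrac{u^1_{ct}}{u^1_{cc}}(t,f,x)+\tfrac{u^2_{ct}}{u^2_{cc}}(t,c-f,x)$.

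Introduce, as in Lemma~\ref{lem:6}, $a_i\set -cu^i_{cc}/u^i_c$ and $q_i\set u^i_{ct}/u^i_c$; by \ref{item:8} there is $N>0$ with $1/N\le a_i\le N$, and $q_i$ is bounded near $\{t_0\}\times(0,\infty)$ for each $t_0\in(0,1)$. Since $\tfrac{u^i_c}{u^i_{cc}}(t,c',x)=-c'/a_i(t,c',x)$ and $\tfrac{u^i_{ct}}{u^i_{cc}}(t,c',x)=-c'q_i(t,c',x)/a_i(t,c',x)$, the identities above rewrite the three target functions for $u$, with $c=se^y$, as
\begin{align*}
  \frac{u_c}{cu_{cc}} &= -\Bigl(\tfrac{f}{c}\tfrac1{a_1}(t,f,x)+\tfrac{c-f}{c}\tfrac1{a_2}(t,c-f,x)\Bigr),\qquad
  \frac{cu_{cc}}{u_c} = -\Bigl(\tfrac{f}{c}\tfrac1{a_1}(t,f,x)+\tfrac{c-f}{c}\tfrac1{a_2}(t,c-f,x)\Bigr)^{-1},\\
  \frac{u_{ct}}{u_c} &= \frac{\tfrac{f}{c}\tfrac{q_1}{a_1}(t,f,x)+\tfrac{c-f}{c}\tfrac{q_2}{a_2}(t,c-f,x)}{\tfrac{f}{c}\tfrac1{a_1}(t,f,x)+\tfrac{c-f}{c}\tfrac1{a_2}(t,c-f,x)}.
\end{align*}
The common denominator is a convex combination of $1/a_1,1/a_2\in[1/N,N]$, hence lies in $[1/N,N]$; combined with $0<f/c<1$ and the boundedness of $q_i$, all three functions are bounded. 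It remains to show that, for each $t_0\in(0,1)$, each is an analytic map of $(t,s)$ from a neighborhood of $(t_0,1)$ to $\mathbf{L}_\infty(\mathbb{R}^{d+1})$.

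Fix $t_0\in(0,1)$. By Lemma~\ref{lem:6} (and since $s\mapsto 1/s$ is analytic near $s=1$), the maps $\rho_1\set s^{-1}f(t,se^y,x)/f(t_0,e^y,x)$ and $\rho_2\set s^{-1}(se^y-f(t,se^y,x))/(e^y-f(t_0,e^y,x))$ are analytic near $(t_0,1)$ with $\rho_i(t_0,1,\cdot,\cdot)=1$, so $f(t,se^y,x)=s\rho_1\,f(t_0,e^y,x)$ and $se^y-f(t,se^y,x)=s\rho_2(e^y-f(t_0,e^y,x))$. Substituting these into the bounded analytic maps $(t,\sigma)\mapsto(a_i(t,\sigma e^{y'},x))$ and $(t,\sigma)\mapsto(q_i(t,\sigma e^{y'},x))$ supplied by $u^i\in\mathcal{U}_3$ --- exactly as $a_i,q_i$ were composed with $sf(t_0,e^y,x)rh_3$ in the proof of Lemma~\ref{lem:6} --- Theorem~\ref{th:3} gives that $a_1,q_1$ evaluated at $f(t,se^y,x)$ and $a_2,q_2$ evaluated at $se^y-f(t,se^y,x)$ are analytic near $(t_0,1)$. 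The weights $f(t,se^y,x)/(se^y)=\rho_1\,f(t_0,e^y,x)e^{-y}$ and $(se^y-f(t,se^y,x))/(se^y)=\rho_2(e^y-f(t_0,e^y,x))e^{-y}$ are analytic as well, being products (multiplication being bounded bilinear, hence analytic, on $\mathbf{L}_\infty(\mathbb{R}^{d+1})$, so covered by Theorem~\ref{th:3}) of the analytic maps $\rho_i$ with the constant bounded maps $f(t_0,e^y,x)e^{-y}$ and $(e^y-f(t_0,e^y,x))e^{-y}$. Since the denominator in the display stays in $[1/N,N]$, reciprocation and quotient are analytic operations (Theorem~\ref{th:3}), and the displayed formulas then exhibit all three target maps as bounded analytic. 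Hence $u\in\mathcal{U}_3$, completing the proof of Theorem~\ref{th:2}. The only delicate step is the one realizing $a_i,q_i$ at the split points $f$ and $c-f$ as analytic $\mathbf{L}_\infty$-valued maps: this is precisely what Lemma~\ref{lem:6} (itself resting on the analytic implicit function theorem, Theorem~\ref{th:4}) is for, since $u^i\in\mathcal{U}_3$ controls $a_i,q_i$ only along the diagonal argument $se^y$, not along $f(t,se^y,x)$.
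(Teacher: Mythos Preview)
Your proof is correct and follows essentially the same route as the paper's: derive the convex-combination identities for $1/a$ and $q/a$ from~\eqref{eq:19} and the time-analogue~\eqref{eq:24}, then invoke Lemma~\ref{lem:6} together with Theorem~\ref{th:3} to turn these into bounded analytic $\mathbf{L}_\infty$-valued maps. The paper compresses the final step into one sentence (``follows now from the boundedness and analyticity of these maps for $u^1$ and $u^2$ and from Lemma~\ref{lem:6} and Theorem~\ref{th:3}''), whereas you spell out the factorizations $f(t,se^y,x)=s\rho_1 f(t_0,e^y,x)$ and $se^y-f(t,se^y,x)=s\rho_2(e^y-f(t_0,e^y,x))$ and the fact that the weights $f/c$, $(c-f)/c$ are themselves analytic; but the logical content is identical.
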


\begin{proof}
  Let $f=f(t,c,x)$ be defined by~\eqref{eq:18}.  From~\eqref{eq:18}
  and the implicit function theorem we obtain that the derivatives
  $f_t$ and $u_{ct}$ exist and
  \begin{equation}
    \label{eq:24}
    \frac{u_{ct}}{u_{cc}}(t,c,x) =
    \frac{u^1_{ct}}{u^1_{cc}}(t,f,x) +
    \frac{u^2_{ct}}{u^2_{cc}}(t,c-f,x).
  \end{equation}

  Denote $a \set -{cu_{cc}}/{u_c}$ and $q \set {u_{ct}}/{u_c}$ and let
  $a_i$ and $q_i$ be the corresponding coefficients for $u^i$,
  $i=1,2$.  From~\eqref{eq:19} and~\eqref{eq:24} we deduce that
  \begin{align*}
    \frac1a &= \frac1{a_1(t,f,x)}\frac{f}{c} +
    \frac1{a_2(t,c-f,x)}\frac{c-f}{c}, \\
    \frac{q}a &= \frac{q_1}{a_1}(t,f,x)\frac{f}{c} +
    \frac{q_2}{a_2}(t,c-f,x)\frac{c-f}{c}.
  \end{align*}
  The required boundedness and analyticity of the maps
  \begin{displaymath}
    (t,s) \mapsto (g(t,se^y,x))_{(x,y)\in \mathbb{R}^d\times
      \mathbb{R}}, \quad \text{$g$ stands for $a$,$1/a$, and $q$},  
  \end{displaymath}
  of some neighborhood of $(0,1)\times\braces{1}$ to
  $\mathbf{L}_{\infty}$ follows now from the boundedness and
  analyticity of these maps for $u^1$ and $u^2$ and from
  Lemma~\ref{lem:6} and Theorem~\ref{th:3}.
\end{proof}

\section{Proof of Theorem~\ref{th:1}}
\label{sec:proof-th:1}

As in \cite{AnderRaim:08}, \cite{HugMalTrub:12},
and~\cite{RiedHerz:13} the proof follows the road map outlined in
Section~\ref{sec:introduction} and consists of two steps. First, in
Lemma~\ref{lem:11}, we describe all Arrow-Debreu equilibria. Then, in
Lemma~\ref{lem:13}, we obtain that every Arrow-Debreu equilibrium
yields a complete Radner equilibrium.

We always work under the assumptions of Theorem~\ref{th:1}. We shall
often use the fact that, as the diffusion coefficients $b=b(t,x)$ and
$\sigma = \sigma(t,x)$ are bounded, the running maximum $\sup_{t\in
  [0,1]} \abs{X_t}$ has all exponential moments.

We begin with some estimates concerning the utility functions.

\begin{Lemma}
  \label{lem:8}
  Let $U=U(c,x)$ be a function on $(0,\infty) \times \mathbb{R}^d$
  satisfying~\ref{item:4}. Then there is a constant $N>0$ such that,
  for $(x,y)\in \mathbb{R}^d\times \mathbb{R}$,
  \begin{equation}
    \label{eq:25}
    (\abs{U} + U_c - U_{cc} + \abs{U_{cx^i}})(e^y,x)  \leq
    e^{N(1+\abs{x}+\abs{y})}. 
  \end{equation}
\end{Lemma}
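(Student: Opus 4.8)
The plan is to integrate the pointwise bound \eqref{eq:8} together with the Inada normalization $U^m(0,x)=\lim_{c\downarrow 0}U^m(c,x)$ and the growth bound \eqref{eq:7} on $U^m(1,x)$ to control each of the four quantities in \eqref{eq:25} separately. Write $a(c,x)\set -cU_{cc}(c,x)/U_c(c,x)$ and $\beta^i(c,x)\set U_{cx^i}(c,x)/U_c(c,x)$; by \eqref{eq:8} these satisfy $0< a\le N$ and $\abs{\beta^i}\le N$ on $(0,\infty)\times\mathbb{R}^d$. The identity $\partial_c\ln U_c=U_{cc}/U_c=-a/c$ gives, upon integrating from $1$ to $c$,
\begin{displaymath}
  U_c(c,x)=U_c(1,x)\exp\Bigl(-\int_1^c \frac{a(r,x)}{r}\,dr\Bigr),
\end{displaymath}
so that $U_c(c,x)\le U_c(1,x)\max(1,c^{-N})$ and $U_c(c,x)\ge U_c(1,x)\min(1,c^{-N})$. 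Hence, once we bound $U_c(1,x)$ above and below by $e^{N'(1+\abs{x})}$ and $e^{-N'(1+\abs{x})}$ respectively, the substitution $c=e^y$ yields the desired estimate for $U_c(e^y,x)$, and then $-U_{cc}(e^y,x)=a(e^y,x)U_c(e^y,x)/e^y\le N e^{-y}U_c(e^y,x)$ and $\abs{U_{cx^i}(e^y,x)}=\abs{\beta^i(e^y,x)}U_c(e^y,x)\le N U_c(e^y,x)$ follow immediately, each absorbing into the exponential on the right of \eqref{eq:25}.

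The remaining work is the bound on $U_c(1,x)$ and on $\abs{U(e^y,x)}$. For $U_c(1,x)$: by the Inada conditions $U_c$ is decreasing in $c$ from $+\infty$ to $0$, so $U_c(1,x)$ is squeezed by comparing with values at $c=1/2$ and $c=2$, but cleaner is to use the ratio bound just derived together with concavity. From $U_c(c,x)\le U_c(1,x)c^{-N}$ for $c\ge 1$ and concavity, $U(2,x)-U(1,x)=\int_1^2 U_c(c,x)\,dc\le U_c(1,x)\int_1^2 c^{-N}dc \le U_c(1,x)$; similarly $U(1,x)-U(1/2,x)\ge U_c(1,x)\int_{1/2}^1 c^{-N}\,dc$ using $U_c(c,x)\ge U_c(1,x)c^{-N}$ on $(0,1)$, which gives $U_c(1,x)\le N''(U(1,x)-U(1/2,x))$. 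Thus it suffices to bound $\abs{U(c,x)}$ for $c$ in a fixed compact subset of $(0,\infty)$, and more generally to prove the $\abs{U}$ part of \eqref{eq:25}. For that, integrate $U_c$: $\abs{U(e^y,x)}\le \abs{U(1,x)}+\bigl|\int_1^{e^y}U_c(c,x)\,dc\bigr|$, and the integral is controlled by $U_c(1,x)\int_{\min(1,e^y)}^{\max(1,e^y)}\max(c^{-N},c^N)\,dc\le U_c(1,x)\,e^{N(1+\abs{y})}$. This couples the $\abs{U}$ bound to the $U_c(1,x)$ bound, so I would close the loop by bounding both simultaneously in terms of $\abs{U(1,x)}$ alone; a lower Inada bound is not needed since we only claim an upper bound on $\abs{U}$, but we do need $U(1,x)$ finite, which holds because $U(c,x)$ is real-valued on $(0,\infty)$.

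The one genuine input still missing is a bound on $\abs{U(1,x)}$ (equivalently $U_c(1,x)$) by $e^{N(1+\abs{x})}$, and here I would invoke \eqref{eq:7}, which asserts exactly $\abs{U^m(1,x)}\le e^{N(1+\abs{x})}$. Then the argument of the previous paragraph gives $U_c(1,x)\le N''(\,\abs{U(1,x)}+\abs{U(1/2,x)}\,)$, and $\abs{U(1/2,x)}\le \abs{U(1,x)}+U_c(1,x)\int_{1/2}^1 c^{-N}dc$ rearranges (using that the integral is a fixed constant $<1$ after absorbing, or more carefully solving the linear inequality) to $U_c(1,x)\le N'''\,\abs{U(1,x)}\le e^{N''''(1+\abs{x})}$. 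The main obstacle is purely bookkeeping: one must be careful that all the one-variable integrals $\int c^{\pm N}\,dc$ over ranges like $(\min(1,e^y),\max(1,e^y))$ are themselves $\le e^{N(1+\abs{y})}$, which is elementary, and that the constants combine without circularity — the dependence graph is $\abs{U(1,x)}\rightsquigarrow U_c(1,x)\rightsquigarrow U_c(e^y,x)\rightsquigarrow \{U_{cc},U_{cx^i},\abs{U}\}$ at $c=e^y$, with no feedback. No deep tool is involved beyond \eqref{eq:7}, \eqref{eq:8}, and the Inada conditions.
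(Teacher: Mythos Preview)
Your overall strategy is sound, and once $U_c(1,x)$ is under control the chain $U_c(1,x)\rightsquigarrow U_c(e^y,x)\rightsquigarrow\{-U_{cc},\,|U_{cx^i}|,\,|U|\}$ works exactly as you describe. The genuine gap is in the step where you bound $U_c(1,x)$ from $|U(1,x)|$ alone.

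Your bootstrap from~\eqref{eq:7} does not close. Concavity gives $U_c(1,x)\le 2(U(1,x)-U(1/2,x))\le 2(|U(1,x)|+|U(1/2,x)|)$ (your claimed lower bound $U_c(c,x)\ge U_c(1,x)c^{-N}$ on $(0,1)$ has the inequality reversed, but plain monotonicity $U_c(c,x)\ge U_c(1,x)$ gives the same conclusion). Combining with $|U(1/2,x)|\le |U(1,x)|+C_N U_c(1,x)$, $C_N=\int_{1/2}^1 c^{-N}dc$, yields $(1-2C_N)U_c(1,x)\le 4|U(1,x)|$; since $C_N\ge\ln 2$ already for $N=1$ and $C_N\to\infty$ as $N\to\infty$, the coefficient $1-2C_N$ is negative and the inequality is vacuous. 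No rescaling of the interval repairs this: \eqref{eq:7} controls $|U|$ only at the single point $c=1$, and one cannot extract a derivative from a single function value. Concretely, $U(c,x)=A(x)\ln c$ with arbitrary positive $A$ satisfies the Inada conditions, \eqref{eq:7} (trivially, since $U(1,x)=0$), and the risk-aversion half of~\eqref{eq:8}, yet $U_c(1,x)=A(x)$ is unconstrained --- unless one also imposes the bound on $U_{cx^i}/U_c$, which forces $|\nabla\ln A|\le N$.

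The fix is already in your hands: you recorded $|\beta^i|=|U_{cx^i}/U_c|\le N$ but used it only at the end. Since $\partial_{x^i}\ln U_c(c,x)=\beta^i(c,x)$ is bounded, integrating in $x$ gives $U_c(1,x)\le U_c(1,0)\,e^{N|x|}$ directly, with no circularity. This is precisely the paper's argument, packaged as follows: set $G(x,y)\set\ln U_c(e^y,x)$; then~\eqref{eq:8} bounds both $G_y=cU_{cc}/U_c$ and $G_{x^i}=U_{cx^i}/U_c$, whence $|G(x,y)-G(0,0)|\le N(|x|+|y|)$ and the $U_c$ estimate is immediate. The bounds for $-U_{cc}$ and $|U_{cx^i}|$ then follow from~\eqref{eq:8} as you wrote, and the bound for $|U|$ from~\eqref{eq:7} plus integration of the $U_c$ bound. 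In short, \eqref{eq:7} is the input for the $|U|$ term, not for $U_c$.
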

\begin{proof}
  From~\ref{item:4} we deduce that the function $G(x,y) \set \ln
  U_c(e^y,x)$ has bounded derivatives. This yields the estimate for
  $U_c$. The inequalities for $U_{cc}$ and $U_{cx^i}$ then follow
  from~\eqref{eq:8}, while the inequality for $U$ follows
  from~\eqref{eq:7}.
\end{proof}

\begin{Lemma}
  \label{lem:9}
  Let $u=u(c,x)$ be a function on $(0,\infty) \times \mathbb{R}^d$
  satisfying \ref{item:6}. Then there is a constant $N>0$ such that
  \begin{displaymath}
    u_c(e^{y},x) \leq e^{N(1+\abs{x}+\abs{y})}, \; (x,y)\in
    \mathbb{R}^d\times \mathbb{R}. 
  \end{displaymath}
\end{Lemma}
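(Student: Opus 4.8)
The plan is to exploit the concavity of $u$ in $c$ to bound $u_c$ by a difference quotient of $u$, and then to control that difference quotient using only the exponential growth estimate~\eqref{eq:9} already granted by~\ref{item:6}. (Note that, unlike in Lemma~\ref{lem:8}, here no bound on a second derivative is available, so the concavity trick is what replaces it.)

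First I would record that, since $u(\cdot,x)$ satisfies the Inada conditions, it is concave and continuously differentiable on $(0,\infty)$; hence $u_c(c,x)$ exists, and for every $c>0$ the slope of the chord joining $\bigl(c/2,u(c/2,x)\bigr)$ to $\bigl(c,u(c,x)\bigr)$ dominates $u_c(c,x)$, i.e.
\[
  u_c(c,x) \;\leq\; \frac{u(c,x)-u(c/2,x)}{c/2} \;=\; \frac{2}{c}\bigl(u(c,x)-u(c/2,x)\bigr).
\]
Then I would substitute $c=e^y$ and estimate the right-hand side termwise. By~\eqref{eq:9}, $u(e^y,x)\leq e^{N(1+\abs{x}+\abs{y})}$; writing $c/2=e^{y-\ln 2}$ and using $\abs{y-\ln 2}\leq\abs{y}+\ln 2$, also $-u(e^y/2,x)\leq \abs{u(e^{y-\ln 2},x)}\leq 2^{N}e^{N(1+\abs{x}+\abs{y})}$. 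Since $2/c=2e^{-y}\leq 2e^{1+\abs{x}+\abs{y}}$, multiplying these factors yields
\[
  u_c(e^y,x)\;\leq\; 2\bigl(1+2^{N}\bigr)\,e^{(N+1)(1+\abs{x}+\abs{y})},
\]
and absorbing the multiplicative constant $2(1+2^{N})$ into the exponent — legitimate because $1+\abs{x}+\abs{y}\geq 1$ — gives the asserted bound with a suitably enlarged constant.

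There is essentially no obstacle; the only points needing care are to apply the chord inequality on the correct side (using $c/2<c$, so that the difference quotient is an \emph{upper} bound for $u_c$) and to stop at $c/2$ rather than letting the argument run down to $0$, since $u(0,x)$ is permitted to be $-\infty$. The same two-sided-estimate-on-$u$ argument will be reused whenever a pointwise growth bound on a first $c$-derivative of an intermediate utility is needed later.
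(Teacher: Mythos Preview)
Your proof is correct and takes essentially the same approach as the paper: bound $u_c(c,x)$ above by the chord slope $\tfrac{2}{c}\bigl(u(c,x)-u(c/2,x)\bigr)$ via concavity, then apply the growth estimate~\eqref{eq:9} to each term. The paper's version is simply terser about absorbing the constants.
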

\begin{proof}
  From the concavity of $u(\cdot,x)$ we deduce that
  \begin{displaymath}
    u_c(c,x) \leq \frac2c\braces{u(c,x) - u({c}/{2},x)} \leq
    \frac2c\braces{\abs{u(c,x)} +\abs{u({c}/{2},x)}}
  \end{displaymath}
  and the result follows from~\eqref{eq:9}.
\end{proof}

\begin{Lemma}
  \label{lem:10}
  Let $u=u(t,c,x)$ be a function on $[0,1]\times (0,\infty) \times
  \mathbb{R}^d$ satisfying \ref{item:8} and such that for every
  $(c,x)\in (0,\infty)\times \mathbb{R}^d$ the function
  $u_c(\cdot,c,x)$ on $[0,1]$ is continuous. Let $h=h(t,x)$ be a
  function on $[0,1]\times \mathbb{R}^d$ satisfying \ref{item:9}. Then
  there is a constant $N>0$ such that
  \begin{equation}
    \label{eq:26}
    t\mapsto e^{-N\abs{\cdot}}u_c(t,e^{h(t,\cdot)},\cdot) \set
    (e^{-N\abs{x}} u_c(t,e^{h(t,x)},x))_{x\in
      \mathbb{R}^d}
  \end{equation}
  is a H\"older continuous map of $[0,1]$ to $\mathbf{L}_{\infty}$
  whose restriction on $(0,1)$ is analytic.
\end{Lemma}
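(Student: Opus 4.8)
The plan is to describe $u_c(t,e^{h(t,\cdot)},\cdot)$, relative to a fixed reference time $t_0\in(0,1)$, through the logarithmic derivatives of $u$ that~\ref{item:8} controls, and then to transfer the regularity of those derivatives and of $h_1$ to the map in question. Set $a\set-cu_{cc}/u_c$ and $q\set u_{ct}/u_c$. By~\ref{item:8} there are a constant $N_0>0$ and an open neighbourhood $V$ of $(0,1)\times\braces{1}$ with $1/N_0\le a\le N_0$ such that $(t,s)\mapsto(a(t,se^y,x))_{(x,y)}$ and $(t,s)\mapsto(q(t,se^y,x))_{(x,y)}$ are bounded analytic maps of $V$ into $\mathbf{L}_\infty(\mathbb{R}^{d+1})$; restricting to the slice $s=1$ yields analytic maps $t\mapsto(a(t,e^y,x))_{(x,y)}$, $t\mapsto(q(t,e^y,x))_{(x,y)}$ of $(0,1)$ into $\mathbf{L}_\infty(\mathbb{R}^{d+1})$, with common bound $N_0$. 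Combining the concavity of $u(t,\cdot,x)$ with~\eqref{eq:10} as in Lemma~\ref{lem:9} gives a constant $N_1$ with $u_c(t,e^y,x)\le e^{N_1(1+\abs{x}+\abs{y})}$ on $[0,1]\times\mathbb{R}^d\times\mathbb{R}$; since $h_1$ is bounded and $h_2$ has linear growth, \ref{item:9} yields $\abs{h(t,x)}\le K(1+\abs{x})$ for some $K$, whence $u_c(t,e^{h(t,x)},x)\le e^{N_2(1+\abs{x})}$ for a suitable $N_2$. I will fix $N$ larger than $N_2$ (and than $N_1(1+K)+1$, and than anything else the estimates below require) and set $F(t)\set e^{-N\abs{\cdot}}u_c(t,e^{h(t,\cdot)},\cdot)$; the last bound gives $F(t)\in\mathbf{L}_\infty(\mathbb{R}^d)$ with $\sup_t\norm{F(t)}_{\mathbf{L}_\infty}<\infty$.

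For Hölder continuity on $[0,1]$ I will use, for $t,t'\in(0,1)$,
\begin{displaymath}
  u_c(t,e^{h(t,x)},x)-u_c(t',e^{h(t',x)},x)=\int_{t'}^{t}u_{ct}(\tau,e^{h(t,x)},x)\,d\tau+\int_{h(t',x)}^{h(t,x)}e^{y}u_{cc}(t',e^{y},x)\,dy,
\end{displaymath}
which is legitimate since~\ref{item:8} and the hypothesis make $u_c$, $u_{ct}=qu_c$ and $u_{cc}$ continuous in the relevant variables. Bounding $\abs{u_{ct}}=\abs{q}\,u_c$ and $\abs{e^{y}u_{cc}}=a\,u_c$ by the product of the $N_0$–bound on $q$, $a$ with the exponential bound on $u_c$, and observing that the length of the second integration range equals $\abs{h(t,x)-h(t',x)}=\abs{h_1(t,x)-h_1(t',x)}$ because the $h_2$–parts cancel, I obtain after multiplying by $e^{-N\abs{x}}$ and taking $\sup_x$ that $\norm{F(t)-F(t')}_{\mathbf{L}_\infty}\le C(\abs{t-t'}+\norm{h_1(t,\cdot)-h_1(t',\cdot)}_{\mathbf{L}_\infty})$, hence $\le C'\abs{t-t'}^{\delta}$ by~\ref{item:9}. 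The estimate extends to $t,t'\in\{0,1\}$ because $u_c(\cdot,c,x)$ is continuous on $[0,1]$ by hypothesis and the bounds on $a$, $q$, $u_c$ hold uniformly on $(0,1)$.

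For analyticity on $(0,1)$, fix $t_0\in(0,1)$. Integrating $\partial_t\ln u_c=q$ along the level $c=e^{h(t_0,x)}$ and then $c\,\partial_c\ln u_c=-a$ from $\ln c=h(t_0,x)$ to $\ln c=h(t,x)$ gives, for $t\in(0,1)$,
\begin{displaymath}
  F(t)=F(t_0)\,\exp\bigl(P(t)+R(t)\bigr),\quad P(t)\set\int_{t_0}^{t}q(r,e^{h(t_0,\cdot)},\cdot)\,dr,\quad R(t)\set-\int_{h(t_0,\cdot)}^{h(t,\cdot)}a(t,e^{y},\cdot)\,dy.
\end{displaymath}
As $F(t_0)\in\mathbf{L}_\infty(\mathbb{R}^d)$ is fixed and $\exp$ is an entire map of $\mathbf{L}_\infty$, it suffices to show that $t\mapsto P(t)$ and $t\mapsto R(t)$ are analytic $\mathbf{L}_\infty(\mathbb{R}^d)$–valued maps near $t_0$. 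For $P$, I compose the analytic map $r\mapsto(q(r,e^{y},x))_{(x,y)}$ with the norm–nonincreasing linear substitution $\phi\mapsto(\phi(x,h(t_0,x)))_{x}$ of $\mathbf{L}_\infty(\mathbb{R}^{d+1})$ into $\mathbf{L}_\infty(\mathbb{R}^d)$ and integrate in $r$. For $R$, I substitute $y=h(t_0,x)+\Delta(t,x)v$ with $\Delta(t,x)\set h(t,x)-h(t_0,x)=h_1(t,x)-h_1(t_0,x)$ and $v\in[0,1]$, so that
\begin{displaymath}
  R(t)=-\Delta(t,\cdot)\int_{0}^{1}a\bigl(t,e^{\Delta(t,\cdot)v}e^{h(t_0,\cdot)},\cdot\bigr)\,dv.
\end{displaymath}
Here $t\mapsto\Delta(t,\cdot)$ is analytic into $\mathbf{L}_\infty(\mathbb{R}^d)$ with $\Delta(t_0,\cdot)=0$, so for $t$ near $t_0$ the scaling factor $e^{\Delta(t,x)v}$ is uniformly close to $1$ and $(t,e^{\Delta(t,x)v})\in V$ for all $x$, $v$. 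I then obtain analyticity of the $v$–integrand from the composition theorem for analytic $\mathbf{L}_\infty$–valued maps (Theorem~\ref{th:3}), applied to the analytic maps $t\mapsto(t,e^{\Delta(t,\cdot)v})$ and $(t,\sigma)\mapsto(a(t,\sigma e^{h(t_0,x)},x))_{x}$ (the latter being the analytic map of~\ref{item:8} composed with the same substitution), with all bounds uniform in $v\in[0,1]$; integrating in $v$ and multiplying by the analytic bounded factor $-\Delta(t,\cdot)$ shows that $t\mapsto R(t)$ is analytic near $t_0$, hence $F$ is analytic on $(0,1)$.

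The main obstacle is the analyticity of $R(t)$: both the endpoints $h(t_0,\cdot)$, $h(t,\cdot)$ of the integral and the integrand $a(t,e^{y},\cdot)$ depend on $t$, and it is precisely the splitting $h=h_1+h_2$ of~\ref{item:9} that makes the reparametrised scaling factor $\sigma=e^{\Delta(t,x)v}$ lie near $1$, where~\ref{item:8} provides analyticity; coping with the $x$–dependence of $\sigma$ forces the use of the composition theorem for analytic maps into $\mathbf{L}_\infty$ rather than elementary estimates. By contrast, the $P$–term and the Hölder bound are routine once the exponential bound on $u_c$ and the uniform bounds on $a$, $q$ are available.
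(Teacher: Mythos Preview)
Your proof is correct and follows essentially the same approach as the paper's: both rely on the exponential formula expressing $u_c(t,\cdot,\cdot)/u_c(t_0,\cdot,\cdot)$ through integrals of $a=-cu_{cc}/u_c$ and $q=u_{ct}/u_c$, both exploit the splitting $h=h_1+h_2$ so that the scaling factor $e^{h(t,x)-h(t_0,x)}=e^{h_1(t,x)-h_1(t_0,x)}$ is bounded, analytic, and close to~$1$ (putting it inside the domain where~\ref{item:8} applies), and both finish with the composition Theorem~\ref{th:3}. The differences are organisational: for analyticity the paper first shows the two-parameter map $(t,s)\mapsto e^{-N(|x|+|y|)}u_c(t,se^y,x)$ is analytic and then composes once with $s=g_2(t,x):=e^{h_1(t,x)-h_1(t_0,x)}$, whereas you keep the exponential formula explicit via $F(t)=F(t_0)e^{P(t)+R(t)}$ and apply Theorem~\ref{th:3} to the integrand of $R$; for H\"older continuity the paper parametrises a single linear path and bounds its derivative, while you split into a $t$-piece and a $c$-piece.
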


\begin{proof}
  From the concavity of $u(t,\cdot,x)$ we obtain
  \begin{displaymath}
    u_c(t,c,x) \leq \frac2{c}(u(t,c,x) - u(t,c/2,x))
  \end{displaymath}
  and then~\eqref{eq:10} implies the existence of $N>0$ such that
  \begin{equation}
    \label{eq:27}
    u_c(t,e^y,x) \leq e^{N(1+\abs{x}+\abs{y})}, \; (t,x,y)\in
    [0,1]\times\mathbb{R}^{d}\times\mathbb{R}. 
  \end{equation}

  Fix $t_0\in (0,1)$. Denote $a \set -{cu_{cc}}/{u_{c}}$ and $q \set
  {u_{ct}}/{u_{c}}$ and observe that
  \begin{displaymath}
    \frac{u_{c}(t,sc,x)}{u_{c}(t_0,c,x)} =
    \exp\left(-\int_{1}^s \frac1r a(t,rc,x) dr + \int_{t_0}^t q(r, c,x)
      dr\right).
  \end{displaymath}
  The analyticity of $a$ and $q$ in~\ref{item:8}, Theorem~\ref{th:3},
  and the inequality~\eqref{eq:27} yield the existence of $N>0$ such
  that
  \begin{equation}
    \label{eq:28}
    (t,s) \mapsto \left(e^{-N(\abs{x}+\abs{y})}
      u_c(t,se^y,x)\right)_{(x,y)\in \mathbb{R}^d \times \mathbb{R}} 
  \end{equation}
  is an analytic map of a neighborhood of $(0,1)\times\braces{1}$ to
  $\mathbf{L}_{\infty}(\mathbb{R}^{d+1})$. Recall that in~\ref{item:9}
  we have $h(t,x) = h_1(t,x) + h_2(x)$. Define the functions
  \begin{align*}
    g_1(t,s,x) &\set u_c\bigl(t,s e^{h(t_0,x)},x\bigr), \\
    g_2(t,x) &\set e^{h(t,x)-h(t_0,x)} = e^{h_1(t,x)-h_1(t_0,x)}.
  \end{align*}
  From the analyticity of~\eqref{eq:28} and the linear growth for
  $h(t_0,\cdot)$ we deduce the existence of $N>0$ such that $(t,s)
  \mapsto e^{-N\abs{\cdot}} g_1(t,s,\cdot)$ is an analytic map of a
  neighborhood of $(t_0,1)$ to $\mathbf{L}_{\infty}$.  We also obtain
  that $g_2>0$, $g_2(t_0,\cdot) = 1$, and, by~\ref{item:9} and
  Theorem~\ref{th:3}, the map $t\mapsto g_2(t,\cdot)$ of $(0,1)$ to
  $\mathbf{L}_\infty$ is analytic. As
  \begin{displaymath}
    u_c(t,e^{h(t,x)},x) = g_1(t,g_2(t,x),x),
  \end{displaymath}
  Theorem~\ref{th:3} now implies the analyticity of the
  map~\eqref{eq:26} at $t_0$; hence, also the analyticity of this map
  on $(0,1)$.

  It remains to verify the H\"older continuity of the
  map~\eqref{eq:26}.  Observe first that as the function
  $u_c(\cdot,c,x)$ on $[0,1]$ is continuous and the function
  $u_c(t,\cdot,x)$ on $(0,\infty)$ is continuous and decreasing, the
  map $t\mapsto u_c(t,\cdot,x)$, $t\in [0,1]$, is uniformly continuous
  on compact sets: for every $n>0$
  \begin{displaymath}
    \sup_{\abs{y}\leq n} \abs{u_c(t,e^y,x) -
      u_c(s,e^y,x)} \to 0 \quad \text{if} \quad \abs{s-t}\to 0, \;
    s,t\in [0,1]. 
  \end{displaymath}
  This property and the continuity of the function $h(\cdot,x)$ on
  $[0,1]$ implies the continuity of the function
  $u_c(\cdot,e^{h(\cdot,x)},x)$ on $[0,1]$ for every $x\in
  \mathbb{R}^d$.

  Fix $0<s<t<1$ and denote
  \begin{displaymath}
    f(r) = u_c(s+r(t-s),e^{h(s,x) + r(h(t,x)-h(s,x))},x), \; r\in
    [0,1]. 
  \end{displaymath}
  From~\eqref{eq:27}, \ref{item:8}, and~\ref{item:9} we deduce the
  existence of constants $M,N>0$ and $0<\delta<1$ such that, for $r\in
  (0,1)$,
  \begin{align*}
    f(r) & \leq e^{N(1+\abs{x})}, \\
    \abs{f'(r)} & \leq M \left((t-s) + \abs{h(t,x)-h(s,x)}\right) f(r)
    \leq e^{N(1+\abs{x})} \abs{t-s}^\delta.
  \end{align*}
  It follows that
  \begin{displaymath}
    \abs{u_c(t,e^{h(t,x)},x) - u_c(s,e^{h(s,x)},x)} = \abs{f(1)-f(0)}
    \leq e^{N(1+\abs{x})} \abs{t-s}^\delta.
  \end{displaymath}
  This implies the H\"older continuity of the map~\eqref{eq:26} taking
  values in $\mathbf{L}_{\infty}$ on $(0,1)$. As, for every $x\in
  \mathbb{R}^d$, the function $u_c(\cdot,e^{h(\cdot,x)},x)$ on $[0,1]$
  is continuous, we also obtain the H\"older continuity of this map on
  $[0,1]$.
\end{proof}

The following lemma accomplishes the first step. For $w\in \Sigma^M$
denote
\begin{displaymath}
  \eta(w) \set e^{-\int_0^1
    r(t,X_t)dt} U_c(\Lambda,X_1;w)\Lambda
  + \int_0^1 e^{-\int_0^t r(s,X_s)ds}
  u_c(t,\lambda_t,X_t;w)\lambda_t dt. 
\end{displaymath}
Recall the functions $(\Phi^m)$ and the set $\mathcal{W}$ defined in
Section~\ref{sec:main-results}.

\begin{Lemma}
  \label{lem:11}
  An Arrow-Debreu equilibrium exists.  We have
  \begin{align*}
    \mathbb{E}\bigl[\sup_{w\in \Sigma^M} \eta(w)\bigr]<\infty
  \end{align*}
  and, hence, the functions $(\Phi^m)$ are well-defined.  The set
  $\mathcal{W}$ is not empty and belongs to the interior of
  $\Sigma^M$. The set of Arrow-Debreu equilibria is given by $(yP(w),
  (C^m(w))_{m=1,\dots,M})$, $y>0$, $w\in \mathcal{W}$, where the
  consumptions $C^m(w)$ are defined in~\eqref{eq:12} and the
  consumption price process $P(w)$ is given by
  \begin{equation}
    \label{eq:29}
    \begin{split}
      P_t(w) &\set e^{-\int_0^t r(s,X_s)ds}
      u_c(t,\lambda_t,X_t;w) \ind{t<1} \\
      &\quad + e^{-\int_0^1 r(s,X_s)ds} U_c(\Lambda,X_1;w) \ind{t=1},
      \; t\in [0,1].
    \end{split}
  \end{equation}
\end{Lemma}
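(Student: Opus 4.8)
The plan is to invoke the abstract existence-and-characterization theorem for Arrow--Debreu equilibria from \cite{Kram:13}, whose hypotheses are of three types: (i) integrability of the aggregate endowment against the candidate utility-gradient, (ii) the Inada / concavity / differentiability structure of the aggregate utility function (which here is $u(w)$ for the rate part and $U(w)$ for the terminal part), and (iii) a ``no-corner'' condition guaranteeing that the equilibrium weights lie in the interior of the simplex. Accordingly, I would organize the proof as follows. First, I would record that by Theorem~\ref{th:2} the aggregate functions $U(\cdot,\cdot;w)$ and $u(\cdot,\cdot,\cdot;w)$ inherit, for each fixed $w\in\Sigma^M$, conditions \ref{item:4} and (\ref{item:6} or \ref{item:8}) from the individual $U^m,u^m$; in particular they satisfy the Inada conditions in $c$, so the Pareto allocations $\Pi^m(w)$, $\pi^m_t(w)$ and the price process $P(w)$ in~\eqref{eq:29} are well-defined, and the first-order conditions $w^m U^m_c(\Pi^m(w),X_1) = U_c(\Lambda,X_1;w)$, $w^m u^m_c(t,\pi^m_t(w),X_t)=u_c(t,\lambda_t,X_t;w)$ hold on $\{w^m>0\}$.

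The analytic core of the lemma is the integrability bound $\mathbb{E}[\sup_{w\in\Sigma^M}\eta(w)]<\infty$. Here I would argue: on the simplex, $U_c(\Lambda,X_1;w)\le \min_m \tfrac1{w^m}U^m_c(\Lambda,X_1)$ is not uniformly bounded, but what matters is that the Pareto allocations satisfy $0\le\Pi^m(w)\le\Lambda$ and $0\le\pi^m_t(w)\le\lambda_t$, so that by concavity $U_c(\Lambda,X_1;w)\Lambda \le \sum_m w^m U^m_c(\Pi^m(w),X_1)\Pi^m(w) \le \sum_m (U^m(\Lambda,X_1)-U^m(0,X_1))$ up to harmless constants, and similarly for the rate term; I would then bound each $U^m(e^y,x)$, $u^m(t,e^y,x)$ using \eqref{eq:7}, \eqref{eq:9}, \eqref{eq:10}, and the gradient estimates from Lemmas~\ref{lem:8}, \ref{lem:9}, \ref{lem:10}. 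Since $\Lambda=e^{H(X_1)}$ and $\lambda_t=e^{h(t,X_t)}$ with $H$, $h$ of linear growth in $x$, and since $\sup_{t\in[0,1]}|X_t|$ has all exponential moments (as noted in the text, because $b,\sigma$ are bounded), every one of these expressions is dominated by $e^{N(1+\sup_t|X_t|)}$ for a deterministic $N$, which is integrable. Taking the supremum over $w$ before the expectation is then legitimate because the bound is $w$-free. This integrability immediately makes each $\Phi^m$ finite on $\Sigma^M$, since $|\Phi^m(w)|$ is controlled by $\mathbb{E}[\eta(w)] + \mathbb{E}[\,e^{-\int_0^1 r}U_c(\Lambda,X_1;w)\Lambda^m + \int_0^1 e^{-\int_0^t r}u_c(t,\lambda_t,X_t;w)\lambda^m_t\,dt\,]$, each piece bounded by the same exponential-moment majorant together with \eqref{eq:6}.

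With integrability in hand, I would apply the abstract theorem of \cite{Kram:13} to obtain: there exists $w\in\mathcal{W}$, the set $\mathcal{W}$ being precisely the set of supporting weights (the vanishing of $\Phi^m$ is exactly the budget-clearing condition $\mathbb{E}[\int_0^1 P_t(w)\,d(C^m_t(w)-I^m_t)]=0$), and the Arrow--Debreu equilibria are exactly the pairs $(yP(w),(C^m(w))_m)$ with $y>0$, $w\in\mathcal{W}$; the consumption-clearing condition~\eqref{eq:2} is automatic because $\sum_m \Pi^m(w)=\Lambda$ and $\sum_m \pi^m_t(w)=\lambda_t$ by construction of the $\sup$-convolution Pareto allocation. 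Finally, for the interiority claim $\mathcal{W}\subset\interior\Sigma^M$, I would argue by contradiction: if $w^m=0$ then $\Pi^m(w)=0$ and $\pi^m(w)\equiv 0$, so $\Phi^m(w) = -\mathbb{E}[e^{-\int_0^1 r}U_c(\Lambda,X_1;w)\Lambda^m + \int_0^1 e^{-\int_0^t r}u_c(t,\lambda_t,X_t;w)\lambda^m_t\,dt] < 0$, strictly, because $U_c,u_c>0$, the discount factors are positive and bounded, and $\mathbb{P}[I^m_1>0]>0$ from~\eqref{eq:6} forces $\Lambda^m>0$ or $\lambda^m_t>0$ on a set of positive measure; this contradicts $w\in\mathcal{W}$.

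\medskip

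I expect the integrability bound $\mathbb{E}[\sup_{w}\eta(w)]<\infty$ to be the main obstacle: one must check carefully that the marginal-utility factors $U_c(\Lambda,X_1;w)$ and $u_c(t,\lambda_t,X_t;w)$, which blow up as some $w^m\to 0$, are nevertheless tamed once multiplied by the corresponding Pareto-optimal consumption (or, more robustly, bounded via the monotone-concave inequality $cf_c(c)\le f(c)-f(c/2)$ applied to the aggregate $u(w)$ and $U(w)$ and then compared to the individual bounds). Everything else is a matter of assembling the hypotheses of \cite{Kram:13} and reading off its conclusions.
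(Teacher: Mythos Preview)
Your proposal is essentially correct and follows the same overall architecture as the paper (reduce to \cite{Kram:13}), but the paper's proof is considerably more economical in what it actually verifies.

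The paper does not attempt to bound $\sup_{w}\eta(w)$ directly, nor does it invoke Theorem~\ref{th:2} or argue interiority of $\mathcal{W}$ by hand. Instead it observes that Theorem~1.4, Corollary~1.5, Theorem~2.4 and Lemma~3.1 of \cite{Kram:13} deliver \emph{all} of the conclusions of Lemma~\ref{lem:11} --- existence, the description of equilibria as $(yP(w),C(w))$ for $w\in\mathcal{W}$, the integrability of $\sup_w\eta(w)$, and $\mathcal{W}\subset\interior\Sigma^M$ --- once one checks the single condition
\[
\mathbb{E}\Bigl[\int_0^1 e^{-\int_0^t r}\,\abs{u^m(t,z\lambda_t,X_t)}\,dt + e^{-\int_0^1 r}\,\abs{U^m(z\Lambda,X_1)}\Bigr]<\infty,\quad z\in(0,1],\ m=1,\dots,M.
\]
This is then immediate from the growth bounds~\eqref{eq:9}, \eqref{eq:10}, \eqref{eq:25}, the linear growth of $H$ and $h$, boundedness of $r$, and exponential moments of $\sup_t|X_t|$. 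No aggregate-utility estimates are needed at this stage; Theorem~\ref{th:2} enters only later, in Lemmas~\ref{lem:12}--\ref{lem:13}.

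Your route --- bound $U_c(\Lambda;w)\Lambda$ via the concavity inequality $cf_c(c)\le 2(f(c)-f(c/2))$ applied to the \emph{aggregate} $U(\cdot;w)$, then compare $U(\Lambda;w)$ and $U(\Lambda/2;w)$ to $\sum_m w^m U^m$ evaluated at $\Lambda$ and $\Lambda/(2M)$ --- does work and yields a $w$-free majorant of the required exponential type. Note, however, that your first-pass inequality $w^mU^m_c(\Pi^m(w))\Pi^m(w)\le w^m(U^m(\Lambda)-U^m(0))$ is unusable when $U^m(0,x)=-\infty$, which the Inada conditions explicitly allow; only your ``more robust'' variant survives. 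Your direct interiority argument is correct. The trade-off: your proof is more self-contained, the paper's is shorter because it identifies exactly which hypothesis of the abstract reference needs checking.
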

\begin{proof}
  We use the criteria for the existence of Arrow-Debreu equilibria
  from \cite{Kram:13}. Theorem~1.4, Corollary~1.5, Theorem~2.4, and
  Lemma~3.1 in this paper imply the assertions of the lemma if, for
  $z\in (0,1]$ and $m=1,\dots,M$,
  \begin{displaymath}
    \mathbb{E}\Bigl[\int_0^1 e^{-\int_0^t
      r(s,X_s)ds} \abs{u^m(t,z\lambda_t,X_t)} dt + e^{-\int_0^1 r(s,X_s)ds}
    \abs{U^m(z\Lambda,X_1)} \Bigr] < \infty.
  \end{displaymath}
 
  From the estimates~\eqref{eq:9} and~\eqref{eq:10} for $u^m$
  and~\eqref{eq:25} for $U^m$ and the linear growth conditions for $H$
  and $h$ we deduce the existence of a constant $N>0$ such that for
  every $0<z\leq 1$
  \begin{align*}
    \abs{u^m(t,z\lambda_t,X_t)} + \abs{U^m(z\Lambda,X_1)} &=
    \abs{u^m(t,ze^{h(t,X_t)},X_t)}+
    \abs{U^m(ze^{H(X_1)},X_1)}\\
    &\leq \frac1{z^N}e^{N(1+ \sup_{t\in [0,1]}\abs{X_t})}.
  \end{align*}
  The result now follows from the boundedness of the impatience rate
  function $r=r(t,x)$ and from the existence of all exponential
  moments for $\sup_{t\in [0,1]}\abs{X_t}$.
\end{proof}

Hereafter, for $w\in \Sigma^M$, we denote by $P(w)$ the consumption
price process defined in~\eqref{eq:29}.

\begin{Lemma}
  \label{lem:12}
  For every $w\in \Sigma^M$ we have
  \begin{displaymath}
    \mathbb{E}[P_1(w) \Psi] = \mathbb{E}[e^{-\int_0^1 r(t,X_t)dt}
    U_c(\Lambda,X_1;w) \Psi] < \infty;  
  \end{displaymath}
  in particular, the martingale $Y(w)$ is well-defined.
\end{Lemma}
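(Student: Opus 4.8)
The plan is to dominate the random variable $P_1(w)\Psi$ by an exponential function of $\abs{X_1}$ and then invoke the exponential integrability of $\sup_{t\in[0,1]}\abs{X_t}$. The first equality in the statement is nothing but the substitution of~\eqref{eq:29} at $t=1$: since $\ind{1<1}=0$ and $\ind{1=1}=1$, we have $P_1(w)=e^{-\int_0^1 r(t,X_t)dt}U_c(\Lambda,X_1;w)$, and multiplying by $\Psi$ gives the right-hand side; note that this is exactly the terminal value $Y_1(w)$. Hence once $\mathbb{E}[P_1(w)\Psi]<\infty$ is established, and since $P_1(w)\Psi>0$ a.s., we conclude that $Y(w)$ is a well-defined strictly positive uniformly integrable martingale (a closed martingale with $L^1$ terminal value) and that $Y_0(w)=\mathbb{E}[Y_1(w)]>0$, so $\mathbb{Q}(w)$ is a well-defined probability measure equivalent to $\mathbb{P}$.

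First I would record the elementary bounds coming directly from the hypotheses. By~\ref{item:3} the impatience rate $r=r(t,x)$ is bounded, so $e^{-\int_0^1 r(t,X_t)dt}$ is bounded; by~\ref{item:2} the rate $q=q(t,x)$ is bounded, so the factor $e^{\int_0^1 q(s,X_s)ds}$ entering $\Psi=G(X_1)e^{\int_0^1 q(s,X_s)ds}$ is bounded as well. By Theorem~\ref{th:2} the aggregate terminal utility $U(\cdot,\cdot;w)$ satisfies~\ref{item:4}, so Lemma~\ref{lem:8} furnishes a constant $N>0$ (depending on $w$) with $U_c(e^y,x;w)\le e^{N(1+\abs{x}+\abs{y})}$. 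Since $\Lambda=e^{H(X_1)}$ and $\abs{H(x)}\le N(1+\abs{x})$ by~\ref{item:5}, this yields $U_c(\Lambda,X_1;w)\le e^{N'(1+\abs{X_1})}$ for a suitable $N'>0$.

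The only step requiring a little care is bounding $G$ itself, since~\ref{item:2} controls only its weak gradient, via $\abs{G_{x_i}(x)}\le e^{N(1+\abs{x})}$. Integrating this bound along the ray from the origin (justified for the weakly differentiable $G$ by a routine mollification argument) gives $G(x)\le e^{N''(1+\abs{x})}$ for Lebesgue-a.e.\ $x$; because $\sigma^{-1}$ is bounded, the law of $X_1$ is absolutely continuous with respect to Lebesgue measure, so this inequality passes to $G(X_1)\le e^{N''(1+\abs{X_1})}$ $\mathbb{P}$-a.s.

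Combining the pieces, $P_1(w)\Psi\le C\,e^{K(1+\abs{X_1})}$ for some constants $C,K>0$. As recalled in the text following Lemma~\ref{lem:7}, the boundedness of $b$ and $\sigma$ makes $\sup_{t\in[0,1]}\abs{X_t}$ possess all exponential moments, so $\mathbb{E}[e^{K(1+\abs{X_1})}]<\infty$ and therefore $\mathbb{E}[P_1(w)\Psi]<\infty$, which is the assertion of the lemma. I expect the only point that is not an immediate chaining of the assumptions to be the passage from the Lebesgue-a.e.\ growth estimate on $G$ to the $\mathbb{P}$-a.s.\ estimate along $X_1$; everything else is routine.
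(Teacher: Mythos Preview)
Your proof is correct and follows essentially the same route as the paper's: bound $P_1(w)$ via Theorem~\ref{th:2}, Lemma~\ref{lem:8}, and~\ref{item:5}; bound $\Psi$ via~\ref{item:2}; then invoke the exponential moments of $\abs{X_1}$. You are in fact more careful than the paper on one point: the paper simply writes ``from~\ref{item:2} we deduce a similar estimate for $\Psi$,'' whereas you correctly observe that~\ref{item:2} bounds only the weak gradient of $G$, so an integration-along-rays (or mollification) argument and the absolute continuity of the law of $X_1$ are needed to obtain $G(X_1)\le e^{N(1+\abs{X_1})}$ almost surely.
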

\begin{proof}
  From Lemma~\ref{lem:8} we deduce the existence of $N>0$ such that
  \begin{displaymath}
    \abs{U^m(e^y,x)} \leq e^{N(1+\abs{x}+\abs{y})}, \; (x,y)\in
    \mathbb{R}^d\times \mathbb{R}, 
  \end{displaymath}
  and, then, from Theorem~\ref{th:2} that $U(w)=U(c,x;w)$
  satisfies~\ref{item:4}.  Lemma~\ref{lem:8}, the linear growth of
  $H=H(x)$, and the boundedness of $r=r(t,x)$ imply the existence of
  $N>0$ such that
  \begin{displaymath}
    \abs{P_1(w)} = U_c(e^{H(X_1)},X_1) e^{-\int_0^1 r(t,X_t)dt} \leq e^{N(1+
      \abs{X_1})}. 
  \end{displaymath}
  From~\ref{item:2} we deduce a similar estimate for $\Psi$:
  \begin{displaymath}
    \Psi = G(X_1) e^{\int_0^1 q(s,X_s)ds} \leq e^{N(1+
      \abs{X_1})}.
  \end{displaymath}
  The integrability of $P_1(w)\Psi$ follows now from the existence of
  all exponential moments for $\abs{X_1}$.
\end{proof}

Lemmas~\ref{lem:1}, \ref{lem:11}, and~\ref{lem:12} imply that the set
of complete Radner equilibria is a subset of $((B(w),S(w)),C(w))$,
$w\in \mathcal{W}$. In view of Lemma~\ref{lem:2}, it only remains to
be shown that for every $w\in \mathcal{W}$ the $(B(w),S(w))$-market is
complete.  This is accomplished in
\begin{Lemma}
  \label{lem:13}
  For every $w\in \Sigma^M$ the $\mathbb{Q}(w)$-martingale $S(w) =
  (S^j_t(w))$ is well-defined and $\mathbb{Q}(w)$ is its only
  equivalent martingale measure.
\end{Lemma}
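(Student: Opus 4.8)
The plan is to reduce the completeness of the $(B(w),S(w))$-market to the backward martingale representation theorem of \citet{KramPred:14}, and then to verify the hypotheses of that theorem by collecting the analyticity, H\"older continuity, and growth estimates assembled in Sections~\ref{sec:model-primitives} and~\ref{sec:conv-funct}. Concretely, recall that a $(B,S)$-market with terminal value~\eqref{eq:1} is complete if and only if every $\mathbb{Q}(w)$-local martingale is a stochastic integral with respect to $S(w)$; equivalently (since $\mathbf{F}=\mathbf{F}^X$ is the Brownian filtration generated by $X$), it suffices to show that the $d$-dimensional martingale $S(w)$ has a volatility matrix of full rank $d$ almost everywhere on $[0,1]\times\Omega$ under $\mathbb{Q}(w)$. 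Because the dividends, the num\'eraire, and the state-price density are all explicit functions of the diffusion, each coordinate $S^j_t(w)$ is of the form $v^j(t,X_t)$ for a function $v^j$ solving an uncoupled linear parabolic PDE, and the full-rank property becomes the non-degeneracy of the Jacobian $(\partial v^j/\partial x_i)$. This is precisely the situation covered by the representation theorem in \citet{KramPred:14}.

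First I would show that $Y(w)$ and $S(w)$ are well-defined: the integrability of $Y_1(w)$ is Lemma~\ref{lem:12}, and the integrability needed for $S^j_1(w)$ requires bounding $\Theta^j/\Psi$ and $\int_0^1 \theta^j_u/B_u(w)\,du$ under $\mathbb{Q}(w)$. Using $\Theta^j/\Psi = F^j(X_1)e^{\int_0^1(p^j-q)(s,X_s)ds}$, the weak differentiability and exponential-gradient bound on $F^j$ in~\ref{item:2} together with $F^j(0)$ finite give $\abs{F^j(X_1)}\le e^{N(1+\sup_t\abs{X_t})}$, and the H\"older/analytic rate functions $p^j,q$ are bounded; for the running integral one controls $1/B_u(w)=u_c(u,\lambda_u,X_u;w)e^{-\int_0^u r}/Y_u(w)$ using Lemma~\ref{lem:10} applied to the aggregate utility $u(w)$ (which lies in $\mathcal{U}_3$ by Theorem~\ref{th:2}, hence satisfies~\ref{item:8}) and the lower bound on the martingale $Y(w)$, plus the growth bound~\eqref{eq:10} on $f^j$ from~\ref{item:2}. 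Since $\sup_t\abs{X_t}$ has all exponential moments and $d\mathbb{Q}(w)/d\mathbb{P}=Y_1(w)/Y_0(w)$, the required $\mathbb{Q}(w)$-integrability follows from Cauchy--Schwarz. Thus $S(w)$ is a genuine $\mathbb{Q}(w)$-martingale.

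Next I would put the problem in the exact form required by \citet{KramPred:14}: identify the terminal random variables $\psi^j$ with (discounted, reweighted) terminal dividends and the change-of-measure density $\zeta$ with $Y_1(w)e^{\int_0^1 r}/u_c$-type quantities, verify that the drift of $X$ enters only measurably, that the volatility $\sigma$ is uniformly continuous in $x$, invertible with bounded inverse, and analytic in $t$ on $(0,1)$ with H\"older continuity up to the boundary (all of~\ref{item:1}), and that the functions defining $\psi^j$ and $\zeta$ inherit the same regularity in $t$ --- H\"older continuous maps into $\mathbf{L}_\infty$ that are analytic on $(0,1)$ --- with at-most-exponential growth in $x$. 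The $t$-regularity of the state-price density follows from Lemma~\ref{lem:10} (with $h$ satisfying~\ref{item:9}) and~\ref{item:3} for $r$; the $t$-regularity of $\psi^j$ from~\ref{item:2}. Crucially, the rank-$d$ hypothesis of the representation theorem is supplied by the full-rank assumption on the Jacobian $(F^j_{x_i})$ in~\ref{item:2}: the terminal Jacobian of the functions generating $S^j_1(w)$ differs from $(F^j_{x_i})$ only by multiplication by an invertible diagonal/scalar factor coming from $G$ and the exponential rate terms, so it has rank $d$ almost everywhere. Invoking \citet[backward martingale representation theorem]{KramPred:14} then yields that every $\mathbb{Q}(w)$-local martingale is a stochastic integral against $S(w)$, i.e.\ the market is complete, and in particular $\mathbb{Q}(w)$ is the unique equivalent martingale measure.

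The main obstacle I anticipate is the bookkeeping needed to write $B(w)$, and hence $1/B_u(w)$, in a form whose $x$-growth and $t$-regularity can be tracked cleanly: $B(w)$ is a ratio involving the conditional-expectation martingale $Y(w)$, so one must either argue at the level of the PDE solutions (exploiting that $Y_t(w)=\mathbb{E}[\cdot\,|\mathcal{F}_t]$ is itself a smooth-in-$(t,x)$ function of $X_t$ by parabolic regularity, with a strictly positive lower bound on compacts that may degenerate only in a controlled way) or route everything through the abstract framework of \citet{KramPred:14}, which is designed to absorb exactly this normalization. A secondary technical point is confirming that the map-valued (not merely pointwise) $t$-analyticity required by \citet{KramPred:14} --- emphasized in Remark~\ref{rem:5} --- is genuinely what Lemma~\ref{lem:10} and Theorem~\ref{th:2} deliver for the aggregate utility $u(w)$; this is where the closure of $\mathcal{U}_3$ under $\oplus_c$ and positive combinations is indispensable, since $u(w)$ is built from the $u^m$ precisely by those operations.
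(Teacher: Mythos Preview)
Your approach is the same as the paper's: invoke Theorem~2.3 of \cite{KramPred:14} and verify its hypotheses using Theorem~\ref{th:2} and Lemmas~\ref{lem:8}--\ref{lem:10}. However, your execution is muddled at exactly the point you flag as the ``main obstacle,'' and the obstacle is illusory.

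Theorem~2.3 in \cite{KramPred:14} is not stated in terms of $S^j_1(w)$ or $1/B_u(w)$; it is stated in terms of the \emph{building blocks} that go into them. Writing
\[
Y_1(w)=K(X_1;w)\,e^{\int_0^1\beta},\qquad
S^j_1(w)=F^j(X_1)e^{\int_0^1\alpha^j}
+\int_0^1\frac{g^j(t,X_t;w)}{Y_t(w)}\,e^{\int_0^t(\alpha^j+\beta)}\,dt,
\]
with $\beta=q-r$, $\alpha^j=p^j-q$, $K(x;w)=G(x)\,U_c(e^{H(x)},x;w)$, and $g^j(t,x;w)=f^j(t,x)\,u_c(t,e^{h(t,x)},x;w)$, the theorem requires only that (i) $F^j$ and $K$ are weakly differentiable with $\abs{F^j_{x_i}}+\abs{K_{x_i}}\le e^{N(1+\abs{x})}$, $K>0$, and $(F^j_{x_i})$ has rank~$d$ a.e., and (ii) $t\mapsto e^{-N\abs{\cdot}}g^j(t,\cdot;w)$, $t\mapsto\alpha^j(t,\cdot)$, $t\mapsto\beta(t,\cdot)$ are H\"older maps of $[0,1]$ into $\mathbf{L}_\infty$, analytic on $(0,1)$. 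The division by $Y_t(w)$, the well-definedness of $S(w)$, and the full-rank volatility conclusion are all handled \emph{inside} the cited theorem; you never need a lower bound on $Y_t(w)$ or parabolic regularity for it. So commit to the second alternative you mention and drop the separate integrability paragraph.

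What you actually have to check, and do not spell out, is the exponential bound on $K_{x_i}$: apply the chain rule to $K(x;w)=G(x)\,U_c(e^{H(x)},x;w)$, use \ref{item:2} for $G,G_{x_i}$, \ref{item:5} for $H,H_{x_i}$, and Lemma~\ref{lem:8} (after Theorem~\ref{th:2} puts $U(w)$ in \ref{item:4}) for $U_c,U_{cc},U_{cx_i}$. You also treat only the \ref{item:8}--\ref{item:9} case; under \ref{item:6}--\ref{item:7} the map $t\mapsto u_c(t,e^{h(t,\cdot)},\cdot;w)$ is constant in $t$ and the exponential bound comes from Lemma~\ref{lem:9}, not Lemma~\ref{lem:10}. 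Finally, your remark that the terminal Jacobian ``differs from $(F^j_{x_i})$ only by an invertible factor'' is unnecessary: the rank hypothesis in \cite{KramPred:14} is on $(F^j_{x_i})$ itself, which is supplied verbatim by \ref{item:2}.
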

\begin{proof}
  We rely on the martingale representation result from
  \cite{KramPred:14}. Recall that
  \begin{align*}
    \frac{d\mathbb{Q}(w)}{d\mathbb{P}} &= \frac{Y_1(w)}{Y_0(w)}, \\
    Y_1(w) &= K(X_1;w) e^{\int_0^1 \beta(t,X_t) dt}, \\
    S_1^j(w) &= F^j(X_1)e^{\int_0^1 \alpha^j(t,X_t) dt} + \int_0^1
    \frac{g^j(t,X_t;w)}{Y_t(w)} e^{\int_0^t (\alpha^j+\beta)(s,X_s)ds}
    dt,
  \end{align*}
  where
  \begin{align*}
    \beta(t,x) &= q(t,x) - r(t,x),\\
    \alpha^j(t,x) &= p^j(t,x) - q(t,x), \\
    K(x;w) &= G(x) U_c(e^{H(x)},x;w), \\
    g^j(t,x;w) &= f^j(t,x) u_c(t,e^{h(t,x)},x;w).
  \end{align*}
  According to Theorem~2.3 in \cite{KramPred:14} the result follows if
  the diffusion coefficients $b$ and $\sigma$ satisfy~\ref{item:1} and
  there is a constant $N=N(w)>0$ such that
  \begin{enumerate}[label=(\roman{*}), ref=(\roman{*})]
  \item \label{item:10} The functions $F^j$ and $K$ are weakly
    differentiable, $K$ is strictly positive, the Jacobian matrix
    $\left(F^j_{x^i}\right)_{i=1,\dots,d, \; j=1,\dots,J}$ has rank
    $d$ almost surely under the Lebesgue measure on $\mathbb{R}^d$,
    and
    \begin{equation}
      \label{eq:30}
      \abs{F^j_{x^i}(x)} + \abs{K_{x^i}(x;w)} \leq e^{N(1 + \abs{x})}, \;
      x\in \mathbb{R}^d.
    \end{equation}
  \item \label{item:11} The maps $t\mapsto e^{-N \abs{\cdot}}
    g^j(t,\cdot;w) \set \bigl(e^{-N \abs{x}} g^j(t,x;w)\bigr)_{x\in
      \mathbf{R}^d}$, $t\mapsto \alpha^j(t,\cdot)$, and $t\mapsto
    \beta(t,\cdot)$ of $[0,1]$ to $\mathbf{L}_{\infty}$ are analytic
    on $(0,1)$ and H\"older continuous on $[0,1]$.
  \end{enumerate}

  The required properties of $F^j$, $\alpha^j$, and $\beta$ follow
  immediately from the conditions of the theorem.

  Clearly, $K>0$. By chain rule,
  \begin{align*}
    K_{x^i}(x;w) &= G_{x^i}(x) U_{c}(e^{H(x)},x;w) \\& + G(x)\left(
      U_{cc}(e^{H(x)},x;w) e^{H(x)} H_{x^i}(x) +
      U_{cx^i}(e^{H(x)},x;w)\right).
  \end{align*}
  From~\ref{item:2} we deduce the existence of a constant $N>0$ such
  that
  \begin{displaymath}
    \abs{G_{x^i}(x)} + G(x) \leq e^{N(1+\abs{x})}, \; x\in
    \mathbb{R}^d. 
  \end{displaymath}
  Theorem~\ref{th:2} implies that $U(w)$ satisfies~\ref{item:4}. Then,
  by Lemma~\ref{lem:8}, there is $N>0$ such that
  \begin{displaymath}
    (\abs{U_c} + \abs{U_{cc}} + \abs{U_{cx^i}})(e^y,x;w) \leq
    e^{N(1+\abs{x}+\abs{y})}, \; (x,y)\in \mathbb{R}^d\times \mathbb{R}.  
  \end{displaymath}
  These inequalities and the condition~\ref{item:5} for $H$ imply the
  exponential estimate~\eqref{eq:30} for $K_{x^i}$.

  Theorem~\ref{th:2} implies that $u(w) = u(t,x,;w)$ satisfies same
  conditions~\ref{item:6} or~\ref{item:8} as $u^m$. The
  assertion~\ref{item:11} for $g^j$ follows now from
  Lemmas~\ref{lem:9} and~\ref{lem:10} and the properties of $f^j$
  in~\ref{item:2}.

  A careful reader may notice that in Lemma~\ref{lem:10}, in addition
  to~\ref{item:8}, we assumed the continuity of $u_c(\cdot,c,x)$ on
  $[0,1]$. This does not restrict any generality. Indeed, denoting
  $q\set u_{ct}/u_c$ we obtain that
  \begin{displaymath}
    u_c(t,c,x) = u_c(1/2,c,x)e^{\int_{1/2}^t q(r,c,x)dr}
  \end{displaymath}
  and, as $q$ is bounded, $u_c(\cdot,c,x)$ can be continuously
  extended on $[0,1]$.
\end{proof}

\appendix\normalsize

\section{On analytic functions with values in $\mathbf{L}_\infty$}
\label{sec:analytic-maps}

In this appendix we state versions of composition and implicit
function theorems for analytic functions with values in
$\mathbf{L}_\infty$ used in the proofs of Theorems~\ref{th:1}
and~\ref{th:2}. Hereafter, $\mathbf{L}_\infty\set\mathbf{L}_\infty(E)$
for some ${F}_\sigma$-set $E\subset \mathbb{R}^d$.

\begin{Theorem}
  \label{th:3}
  Let $f = f(x,y)$ and $g_i = g_i(x,z)$, $i=1,\dots,m$, where $x\in
  \mathbb{R}^n$, $y\in \mathbb{R}^m$, and $z\in \mathbb{R}^l$, be
  analytic maps of neighborhoods of $(0,0)$ to $\mathbf{L}_\infty$
  such that $g_i(0,0)=0$.  Then
  \begin{displaymath}
    h(x,z) \set f(x,g_1(x,z),\dots,g_m(x,z)), \; (x,z)\in
    \mathbb{R}^n\times \mathbb{R}^l,
  \end{displaymath}
  is an analytic map of a neighborhood of $(0,0)$ to
  $\mathbf{L}_\infty$.
\end{Theorem}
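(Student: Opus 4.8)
The plan is to prove Theorem~\ref{th:3} by the classical method of majorants, carried out with constants that are uniform in the $\mathbf{L}_\infty$-variable. Throughout I would regard $f$ as a real-valued function $f(x,y,e)$, $(x,y)$ near $(0,0)$, $e\in E$, such that for every fixed $e$ the map $(x,y)\mapsto f(x,y,e)$ is real-analytic and agrees with the pointwise evaluation of the $\mathbf{L}_\infty$-valued power series of $f$; likewise $g_i=g_i(x,z,e)$. The composition is then the ordinary one in the analytic variables, with $e$ carried along as a parameter: $h(x,z,e)=f\bigl(x,g_1(x,z,e),\dots,g_m(x,z,e),e\bigr)$; that this lands in $\mathbf{L}_\infty$ and is analytic is what we must show. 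Since $\mathbf{L}_\infty$ is a commutative Banach algebra with submultiplicative norm, powers and products of $\mathbf{L}_\infty$-valued maps (such as $g_i^{\beta_i}$) are again in $\mathbf{L}_\infty$, which is what makes the formal substitution legitimate.

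First I would record Cauchy-type estimates for the Taylor coefficients. From the definition of analyticity and the (unconditional) norm-convergence of the defining power series on a small closed polydisc, one gets $r,C>0$ with $\norm{A_{\alpha\beta}}_{\mathbf{L}_\infty}\le C r^{-\abs{\alpha}-\abs{\beta}}$ for the coefficients $A_{\alpha\beta}$ of $f$ at $(0,0)$, and similarly $\norm{B^i_{\gamma\delta}}_{\mathbf{L}_\infty}\le C r^{-\abs{\gamma}-\abs{\delta}}$ for $g_i$, with $B^i_{00}=0$ because $g_i(0,0)=0$. Substituting formally and collecting terms yields a formal series $h=\sum_{\mu,\nu}C_{\mu\nu}x^\mu z^\nu$, where each $C_{\mu\nu}$ is a fixed polynomial, with nonnegative integer coefficients, in the $A$'s and $B$'s — precisely the polynomial occurring in the scalar composition. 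Now put $\widehat f(x,y)\set C\bigl/\bigl(\prod_k(1-x_k/r)\prod_j(1-y_j/r)\bigr)$ and $\widehat g_i(x,z)\set\sum_{(\gamma,\delta)\neq 0}C r^{-\abs{\gamma}-\abs{\delta}}x^\gamma z^\delta$; these are genuine scalar analytic functions near the origin with $\widehat g_i(0,0)=0$, and by the classical finite-dimensional composition theorem for real-analytic functions, $\widehat h\set\widehat f(x,\widehat g_1(x,z),\dots,\widehat g_m(x,z))$ is analytic near $(0,0)$, its Taylor series $\sum_{\mu,\nu}\widehat c_{\mu\nu}x^\mu z^\nu$ having nonnegative coefficients and converging absolutely on a neighborhood of $0$.

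Because $\widehat h$ is built from the same polynomials with every coefficient norm replaced by a majorant, $\norm{C_{\mu\nu}}_{\mathbf{L}_\infty}\le\widehat c_{\mu\nu}$, so $\sum_{\mu,\nu}\norm{C_{\mu\nu}}_{\mathbf{L}_\infty}\abs{x}^\mu\abs{z}^\nu\le\widehat h(\abs{x},\abs{z})<\infty$ on a neighborhood of $(0,0)$, and hence $\sum_{\mu,\nu}C_{\mu\nu}x^\mu z^\nu$ converges absolutely in $\mathbf{L}_\infty$. It remains to identify the sum with $h(x,z)$: for each $e\in E$, absolute convergence of the corresponding scalar series allows free rearrangement, and since $\abs{g_i(x,z,e)}\le\widehat g_i(\abs{x},\abs{z})<r$ the point $(x,g_1(x,z,e),\dots,g_m(x,z,e))$ lies inside the polydisc of convergence of $f(\cdot,\cdot,e)$, so the sum at $e$ equals $f(x,g_1(x,z,e),\dots,g_m(x,z,e),e)=h(x,z)(e)$. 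This gives the required norm-convergent expansion of $h$ at $(0,0)$. To upgrade this to analyticity on a full neighborhood of $(0,0)$, I would re-center: for $(x_0,z_0)$ near $(0,0)$ apply the same argument to $\widetilde f(x,y)\set f\bigl(x_0+x,\,g(x_0,z_0)+y\bigr)$ and $\widetilde g_i(x,z)\set g_i(x_0+x,z_0+z)-g_i(x_0,z_0)$, which are analytic near $(0,0)$, satisfy $\widetilde g_i(0,0)=0$, and give $\widetilde f(x,\widetilde g(x,z))=h(x_0+x,z_0+z)$; here one uses that $f$ is analytic on a neighborhood of $(0,0)$ and hence at $(x_0,g(x_0,z_0))$ for $(x_0,z_0)$ small.

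The step I expect to be the main obstacle is the identification carried out in the third paragraph: one must interpret the composition correctly (pointwise in $e$ in the inner slot, analytically in the outer variables) and verify that the arguments $g_i(x,z)$ stay inside the polydisc on which the expansion of $f$ is valid, which is exactly what forces one to shrink neighborhoods carefully. Extracting the geometric Cauchy bounds from the bare definition of analyticity is a second, more routine point, and rests on the unconditional convergence of the multi-index defining series.
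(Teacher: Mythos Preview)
Your proposal is correct and follows precisely the classical majorant method that the paper invokes: the paper does not give its own detailed proof but simply remarks that the argument is ``essentially identical to the proofs of the corresponding results for real-valued analytic functions'' and cites Cartan, Propositions~IV.5.5.1 and~IV.5.6.1. Your write-up is a faithful execution of that reference in the $\mathbf{L}_\infty$-valued setting, using the submultiplicativity of the Banach-algebra norm to carry the majorant estimates through.
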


\begin{Theorem}
  \label{th:4}
  Let $f = f(x,y)$, where $x\in \mathbb{R}^n$ and $y\in \mathbb{R}$,
  be an analytic map of a neighborhood of $(0,0)$ to
  $\mathbf{L}_\infty$ such that $f(0,0)=0$ and $f_y(0,0) \geq
  \epsilon$ for some constant $\epsilon>0$. Then there is a
  neighborhood $V\subset \mathbb{R}^n$ of $0$ and an analytic map
  $g=g(x)$ of $V$ to $\mathbf{L}_\infty$ such that
  \begin{displaymath}
    f(x,g(x))=0, \; x\in V. 
  \end{displaymath}
  
\end{Theorem}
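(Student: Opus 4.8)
The plan is to recast the scalar equation $f(x,y)=0$ as an operator equation in the Banach space $\mathbf{L}_\infty=\mathbf{L}_\infty(E)$ and then to invoke the analytic implicit function theorem in Banach spaces. Near $(0,0)$ the analytic map $f$ admits a norm-convergent expansion
\begin{displaymath}
  f(x,y) = \sum_{\alpha,k} A_{\alpha,k}\, x^\alpha y^k, \qquad A_{\alpha,k}\in \mathbf{L}_\infty,
\end{displaymath}
where $\alpha$ runs over multi-indices on $\mathbb{R}^n$ and $k\geq 0$; by the usual Abel argument there is $\rho>0$ with $\sum_{\alpha,k}\norm{A_{\alpha,k}}_{\mathbf{L}_\infty}\,\rho^{\abs{\alpha}+k}<\infty$. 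Note that $A_{0,0}=f(0,0)=0$ and $A_{0,1}=f_y(0,0)$.

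First I would introduce the substitution operator: for $v\in\mathbf{L}_\infty$ with $\norm{v}_{\mathbf{L}_\infty}<\rho$ set
\begin{displaymath}
  F(x,v)\set \sum_{\alpha,k} A_{\alpha,k}\, x^\alpha v^k,
\end{displaymath}
where $v^k$ is the $k$-th power in the Banach algebra $\mathbf{L}_\infty$. Since $\norm{A_{\alpha,k}\,v^k}_{\mathbf{L}_\infty}\leq \norm{A_{\alpha,k}}_{\mathbf{L}_\infty}\norm{v}_{\mathbf{L}_\infty}^k$, the series converges absolutely for $\abs{x}<\rho$ and $\norm{v}_{\mathbf{L}_\infty}<\rho$, so $F$ is an analytic map of a neighborhood of $(0,0)$ in $\mathbb{R}^n\times\mathbf{L}_\infty$ to $\mathbf{L}_\infty$. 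The operator $F$ is the natural extension of $f$ by substitution of an $\mathbf{L}_\infty$-element for $y$, so the conclusion $F(x,g(x))=0$ is exactly the asserted identity $f(x,g(x))=0$.

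Next I would compute the partial derivative in $v$. Because $A_{0,0}=0$, we have $F(0,v)=A_{0,1}v+\sum_{k\geq 2}A_{0,k}v^k$, so $D_vF(0,0)$ is multiplication by $A_{0,1}=f_y(0,0)$. The hypothesis $f_y(0,0)\geq\epsilon$ and the inclusion $f_y(0,0)\in\mathbf{L}_\infty$ give $\epsilon\leq f_y(0,0)\leq\norm{f_y(0,0)}_{\mathbf{L}_\infty}$ a.e., so $1/f_y(0,0)\in\mathbf{L}_\infty$ and multiplication by $1/f_y(0,0)$ is a bounded two-sided inverse of $D_vF(0,0)$; thus $D_vF(0,0)$ is an isomorphism of $\mathbf{L}_\infty$. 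The analytic implicit function theorem in Banach spaces (reduce to the holomorphic case by complexification, or argue directly via a Picard iteration combined with Cauchy estimates) then yields a neighborhood $V\subset\mathbb{R}^n$ of $0$ and an analytic map $g\colon V\to\mathbf{L}_\infty$ with $g(0)=0$ and $F(x,g(x))=0$ on $V$, which is the required $g$.

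The step I expect to be the main obstacle is the analyticity of $F$ as a map between Banach spaces: one must verify that termwise substitution of an $\mathbf{L}_\infty$-element into the power series of $f$ produces a genuinely norm-convergent series of bounded homogeneous polynomials. This rests on $\mathbf{L}_\infty$ being a Banach algebra under pointwise multiplication — so $v\mapsto v^k$ is a bounded $k$-linear monomial — and on the coefficient bound above, which makes the degree-$j$ part of the series have norm at most $\sum_{\abs{\alpha}+k=j}\norm{A_{\alpha,k}}_{\mathbf{L}_\infty}$, a geometrically summable quantity. An alternative that avoids the Banach-space implicit function theorem is to apply the classical scalar analytic implicit function theorem to $y\mapsto f(x,y)(\xi)$ for a.e.\ fixed $\xi\in E$: the quantitative (majorant) form of that theorem gives Taylor-coefficient and radius bounds for the pointwise solution that depend only on $\epsilon$, $n$, and the bounds on $(A_{\alpha,k})$, hence are uniform in $\xi$, and since these Taylor coefficients are Lagrange-inversion polynomials in the measurable $A_{\alpha,k}$ they are again measurable; assembling them gives the same $g$.
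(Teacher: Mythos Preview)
Your argument is correct. The paper does not actually give a proof: it simply says that the proof is ``essentially identical to the proofs of the corresponding results for real-valued analytic functions'' and refers to Cartan, Propositions~IV.5.5.1 and~IV.5.6.1, whose arguments are based on formal power series and the method of majorants.

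Your primary route is genuinely different from, and arguably cleaner than, what the paper has in mind. You exploit that $\mathbf{L}_\infty$ is a commutative Banach algebra to lift the scalar variable $y$ to an $\mathbf{L}_\infty$-variable $v$, obtaining an analytic map $F\colon \mathbb{R}^n\times \mathbf{L}_\infty \to \mathbf{L}_\infty$ whose Fr\'echet derivative in $v$ at $(0,0)$ is multiplication by $f_y(0,0)$; the hypothesis $f_y(0,0)\geq\epsilon$ makes this an isomorphism, and the analytic implicit function theorem in Banach spaces finishes. This approach packages all the estimates into a single abstract theorem and makes transparent why the lower bound on $f_y(0,0)$ is exactly what is needed. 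Your alternative --- applying the quantitative scalar implicit function theorem pointwise in $\xi\in E$, using that the majorant bounds depend only on $\epsilon$ and the coefficient norms, and reading off measurability from the Lagrange-inversion formulas --- is essentially the Cartan argument transplanted to the $\mathbf{L}_\infty$-valued setting, and is the one the paper is pointing to. Both yield the same $g$; the Banach-algebra lift is more conceptual, while the majorant route is more self-contained and gives explicit radii.
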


The proofs of both theorems are essentially identical to the proofs of
the corresponding results for real-valued analytic functions, see,
e.g., Propositions~IV.5.5.1 and~IV.5.6.1 in \cite{Cart:95}.

\bibliographystyle{plainnat}

\bibliography{../bib/finance}

\end{document}